\crefname{lem}{Lemma}{Lemmas}
\crefname{section}{Section}{Sections}
\crefname{thm}{Theorem}{Theorems}
\newtheorem{lem}{Lemma}
\newtheorem{prop}[lem]{Proposition}
\newtheorem{thm}[lem]{Theorem}
\theoremstyle{remark}
\newtheorem*{rem}{Remark}
\numberwithin{equation}{section}
\newcommand{\spin}{\ifmmode{\rm Spin}\else{${\rm spin}$\ }\fi}
\newcommand{\spinc}{\ifmmode{{\rm Spin}^c}\else{${\rm spin}^c$}\fi}
\newcommand{\gsm}{g_{4}}
\newcommand{\gtop}{g_{4}^{{\rm top}}}
\newcommand{\galg}{g_{\rm alg}}
\newcommand{\ualg}{u_{\rm alg}}
\newcommand{\addresseshere}{%
  \enddoc@text\let\enddoc@text\relax
}
\let\mcnewpage\newpage
\newcommand{\changenewpage}{%
  \renewcommand\newpage{%
    \if@firstcolumn
      \hrule width\linewidth height0pt
      \columnbreak
    \else
      \mcnewpage
    \fi
}}
\newcommand{\myemail}[1]{\href{mailto:#1}{#1}}
\let\cref\Cref
\DeclareMathOperator\disc{disc}
\title{On calculating the slice genera of 11- and 12-crossing knots}%
\author{Lukas Lewark}
\address{University of Bern, Mathematical Institute, Alpeneggstr. 22, 3012 Bern, Switzerland}
\email{\myemail{lukas.lewark@math.unibe.ch}}
\author{Duncan McCoy}%
\address{Department of Mathematics, University of Texas at Austin, Austin, Texas, USA}%
\email{\myemail{d.mccoy@math.utexas.edu}}
\date{}%
\renewcommand{\arraystretch}{1.2}
\begin{document}
\begin{abstract}
This paper contains the results of efforts to determine values of the smooth and the topological slice genus of 11- and 12-crossing knots.
Upper bounds for these genera were produced by using a computer to search for genus one concordances between knots. For the topological slice genus further upper bounds were produced using the algebraic genus.
Lower bounds were obtained using a new obstruction from the Seifert form and by use of Donaldson's diagonalization theorem.
These results complete the computation of the topological slice genera for all knots with at most 11 crossings and leaves the smooth genera unknown for only two 11-crossing knots.
For 12 crossings there remain merely 25 knots whose smooth or topological slice genus is unknown.
\end{abstract}
\maketitle
\section{Introduction}\label{sec:intro}
For a knot $K$ in $S^3$ the \emph{smooth slice genus} $\gsm(K)$ is the minimal possible genus of a compact oriented surface smoothly and properly embedded into $B^4$ with boundary~$K$.
A knot $K$ is said to be \emph{smoothly slice} if $\gsm(K)$  is zero.
Replacing the condition `smooth' by `locally flat' yields the so-called \emph{topological slice genus} $\gtop(K)$,
and the notion of \emph{topologically slice} knots.
Since a smoothly embedded surface is necessarily locally flat, we always have the inequality
$\gtop(K) \leq \gsm(K)$.
However, in general, $\gtop(K)$ can be strictly smaller than $\gsm(K)$, as was first seen as a consequence of the work of Freedman and Donaldson on 4-manifolds.
For example, the $(-3,5,7)$-pretzel knot is known to be topologically, but not smoothly, slice \cite{rasmussen10khovanov}.
Knots for which the smooth and topological slice genera differ are of particular interest: they exhibit the divergence of smooth and topological behaviour in four dimensions, allowing, for example, the construction of exotic $\mathbb{R}^4$'s \cite[\S9.4]{GompfStip}.

While the three-dimensional genus of a knot can be calculated using Haken's algorithm or knot Floer homology,
it is a hard problem, for which no algorithm is known, to calculate the smooth or topological slice genus of a given knot.
Over the years, a rich variety of tools that can be used in the calculation of slice genera have been devised. For the topological slice genus, these include Freedman's disk theorem, which asserts that any knot with Alexander polynomial one is topologically slice \cite{Freedman_82_TheTopOfFour-dimensionalManifolds}, as well as lower bounds coming from various knot signatures \cite{Kauffman76signature,Levine1969,tristram_1969}, Casson-Gordon invariants \cite{CassonGordon_86} and $L^2$-invariants \cite{Cha_08_TopMinGenusandL2}. For the smooth slice genus, notable lower bounds include those arising from gauge theory \cite{KronheimerMrowka_Gaugetheoryforemb}, Heegaard Floer homology \cite{MR2026543} and Khovanov homology \cite{rasmussen10khovanov}.

This paper contains the results of efforts to determine unknown\footnote{Unknown = `listed as unknown on KnotInfo \cite{Knotinfo} at the time of writing'.} values of the smooth and topological slice genus for 11- and 12-crossing knots.

\begin{table}[t]
\begin{center}%
\hfill{%
\parbox{4cm}{%
\begin{tabular}{ | l | c | c |}
    \hline
    Knot      & $\gsm(K)$ & $\gtop(K)$  \\ \hline
    $11n34$   & 0 or 1    & 0           \\ \hline
    $11n80$   & 1 or 2    & 1           \\ \hline
    $12a153$  & 1 or 2    & 1           \\ \hline
    $12a187$  & 1 or 2    & 1           \\ \hline
    $12a230$  & 1 or 2    & 1           \\ \hline
    $12a244$  & 2         & 1 or 2      \\ \hline
    $12a317$  & 1 or 2    & 1           \\ \hline
    $12a450$  & 1 or 2    & 1           \\ \hline
    $12a570$  & 1 or 2    & 1           \\ \hline
    $12a624$  & 1 or 2    & 1           \\ \hline
    $12a636$  & 1 or 2    & 1           \\ \hline
    $12a810$  & 2         & 1 or 2      \\ \hline
    $12a905$  & 1 or 2    & 1 or 2      \\ \hline
    $12a1142$ & 2         & 1 or 2      \\ \hline
\end{tabular}}\hfill
\parbox{4cm}{%
\begin{tabular}{ | l | c | c |}
    \hline
    Knot      & $\gsm(K)$ & $\gtop(K)$  \\ \hline
    $12a1189$ & 1 or 2    & 1           \\ \hline
    $12a1208$ & 1 or 2    & 1           \\ \hline
    $12n52$   & 1 or 2    & 1           \\ \hline
    $12n63$   & 1 or 2    & 1           \\ \hline
    $12n225$  & 1 or 2    & 1           \\ \hline
    $12n239$  & 1 or 2    & 1           \\ \hline
    $12n512$  & 1 or 2    & 1           \\ \hline
    $12n549$  & 2         & 1 or 2      \\ \hline
    $12n555$  & 1 or 2    & 1 or 2      \\ \hline
    $12n558$  & 1 or 2    & 1           \\ \hline
    $12n642$  & 2         & 1 or 2      \\ \hline
    $12n665$  & 1 or 2    & 1           \\ \hline
    $12n886$  & 1 or 2    & 1           \\ \hline
\end{tabular}\\[1.1\baselineskip]}}
\hfill\mbox{}%
\end{center}%
\vspace{\baselineskip}
\caption{The remaining unknown values.}
\label{table:unknown}
\end{table}
In order to determine the slice genus of a knot one needs to produce an upper bound, typically by exhibiting a surface cobounding the knot, and then establishing a lower bound which shows that surface has optimal genus.
We apply a variety of methods to produce new upper and lower bounds.

Our lower bounds come from two sources. The first is an invariant $t$ due to Taylor, which is a lower bound for the topological slice genus. Though $t$ is determined by the Seifert form,
it is generally difficult to compute. In order to apply it, we deduce from $t$ a new, computable obstruction to $\gtop \leq 1$ by applying the theory of quadratic forms
over the $p$-adic numbers to the symmetrization of the Seifert form.
Our other source of lower bounds is Donaldson's diagonalization theorem, from which we obtain an obstruction to
the smooth slice genus being equal to the signature bound, i.e.~$\gsm = |\sigma| / 2$ for alternating knots.

The majority of our upper bounds arise from a computer search to find knots related by crossing changes and crossing resolutions and using that the slice genus (smooth or topological)
of knots related by a concordance of genus one differs by at most one.
In other cases, we obtained upper bounds for $\gtop$ by computing the recently introduced algebraic genus. Similar to Taylor's invariant, the algebraic
genus is an invariant depending only on the Seifert form. That it is an upper bound is a consequence of Freedman's disk theorem.

Altogether these methods allow us to complete the calculation of $\gtop$ for 11-crossing knots.
In total, for knots with up to 12 crossings, there remain 22 unknown values for the smooth slice genus, and 7 unknown values for the topological slice genus.

\Cref{table:unknown} summarizes the prime knots with crossing number at most 12 for which the topological and smooth slice genera are not known and their possible values. We hope that this paper will be helpful in drawing attention to these remaining unknown values, which may well require new and more interesting techniques to determine.

Each of the four following sections is devoted to one of the slice genus bounds we used.
First, we looked for genus one concordances (\cref{sec:genusone}); then, we applied the
more sophisticated tools to the remaining unknown genera: the obstruction from Taylor's bound (\cref{sec:taylor}),
the obstruction from Donaldson's theorem (\cref{sec:Donaldson}), and the upper bound from the algebraic genus
(\cref{sec:galg}).
Details about calculations are contained in the appendices.
\section*{Erratum and  Addendum (August 2023)}
We are grateful to Stepan Orevkov for pointing the following out to us~\cite{orevkov}.
\begin{itemize}
\item In the previous version of this paper, condition (1) in \cref{thm:padic} was erroneously stated as
\smallskip
\begin{quote}
There is a non-negative integer $e$ and an integer $n$
such that $\disc \eta = p^{2e}(np + 1)$, and the Hasse symbol of $\eta$ over $\mathbb{Q}_p$ is $-1$.
\end{quote}
\smallskip
In the current version of this paper, the condition has been corrected to:
\smallskip
\begin{quote}
There is an integer $e\geq 0$, an integer $n$, and an integer $r$ not divisible by $p$,
such that $\disc \eta = p^{2e}(np + r^2)$, and the Hasse symbol of $\eta$ over $\mathbb{Q}_p$ is $-1$.
\end{quote}
\smallskip
Since the correction makes condition (1) weaker, it makes \cref{thm:padic} a stronger tool to show $t(K) \geq 2$ and thus $\gtop(K) \geq 2$ for knots $K$. In particular, our previous computations of $t$ and $\gtop$ for various knots remain unaffected by this correction.\smallskip
\item \cref{thm:padic} may actually be applied to the knot $K =$ 12a1142, since $\disc K = -99 = 3^2\cdot (-4\cdot 3 + 1)$.  This implies $t(K) = \gtop(K) = \galg(K) = 2$.\smallskip
\item The Taylor invariant of $K = $ 12n549 is $1$. Indeed, the Seifert matrix
\[
\begin{pmatrix}
-1 & 0 & 0 & -1 & 0 & 0 & 0 & 0\\
0 & -1 & 0 & 0 & 0 & -1 & -1 & -1\\
0 & 0 & 0 & -1 & 0 & 0 & 0 & 0\\
0 & 0 & 0 & -2 & 0 & 0 & 0 & 0\\
0 & 0 & -1 & -1 & 0 & 0 & 0 & 0\\
0 & 0 & -1 & 0 & -1 & -1 & -1 & 0\\
0 & 0 & -1 & -1 & -1 & 0 & 0 & 0\\
0 & 0 & 0 & 0 & -1 & -1 & -1 & -1
\end{pmatrix}
\]
of $K$ (taken from~\cite{Knotinfo})
admits an isotropic subgroup with basis
\[
(0,0,-2,1,0,0,0,0),\quad
(1,1,1,-1,1,-1,1,-1),\quad
(0,1,0,0,0,-1,2,-1).
\]
\end{itemize}
Moreover, we have noticed the following:
\begin{itemize}
\item The algebraic genus of $K = $ 12a735 is 1. Indeed, the Seifert matrix
\[
\begin{pmatrix}
1 &  0 &  0 &  0\\
 -1 &  -1 &  1 &  -1\\
 1 &  0 &  -2 &  1\\
 -1 &  0 &  0 &  -3
\end{pmatrix}
\]
of $K$ (taken from~\cite{Knotinfo}) admits an Alexander-trivial subgroup with basis
\[(4,3,1,-2),\quad(1,1,0,0).\]
\item At the end of \cref{sec:galg}, we claimed that
\smallskip
\begin{quote}
A sharp upper bound for the algebraic genus is given in 2341 cases by the algebraic unknotting number, i.e.~$\galg = \ualg$;
in the other 582 cases, we explicitly found an Alexander-trivial subgroup of sufficient rank (for those knots, $\galg = \ualg - 1$).
\end{quote}
\smallskip
These numbers should instead be 2324 and 599, respectively.
\item 
The transformation matrices in the ancillary text file distributed with the previous arXiv version were not correct. We have fixed that in the current version.
\end{itemize}

We have not updated the body of this paper, with the exception of correcting \cref{thm:padic}.
However, for the convenience of the reader,
we briefly summarize here the developments since the previous version of this paper (November 2016)
regarding the computation of the knot invariants $t, \gsm, \gtop, \galg$ for knots in the table of prime knots with crossing number~12 or less:
\begin{itemize}
\item Since the two knots 12a1142 and 12n549 mentioned above were the last ones
with unknown $t$, the Taylor invariant $t$ is now known for all knots in the table.
\item The smooth slice genus has been computed for all knots in the table:
Piccirillo~\cite{zbMATH07168645} has shown $\gsm(11n34) = 1$, and
for the 21 remaining knots in \cref{table:unknown}, $\gsm$ has been computed
in~\cite{zbMATH07445211,2112.14925}.
\item The topological slice genus $\gtop$ equals 2 for 12a1142 (see above),
and has been computed as 1 for 12a905 and 12n555~\cite{zbMATH07445211,2112.14925}.
So, $\gtop$ remains unknown just for the four knots
\[
12a244, 12a810, 12n549, 12n642.
\]
For each of those knots, $\gtop$ is either 1 or 2.
\item The algebraic genus $\galg$ of the remaining 52 knots has been computed in the latest version of~\cite{1905.08305},
so $\galg$ is now known for all knots in the table.
\end{itemize}
The values of $\gsm$ and $\gtop$ can be found on knotinfo~\cite{Knotinfo}, and the values of $t$ and $\galg$ in the online table~\cite{galg-table}.
\section{Taylor's lower bound}\label{sec:taylor}
The Seifert form $\theta$ of a knot $K$ yields several well-known lower bounds to the topological slice genus, namely the bounds coming from the Levine-Tristram signatures and the Fox-Milnor condition.
All of these lower bounds are subsumed by Taylor's lower bound $t(K)$ \cite{taylor}: let $a(\theta)$ be the maximal rank of an isotropic subgroup $U$ of $\mathbb{Z}^{\dim \theta}$,
i.e.~a subgroup on which $\theta|_{U\times U}$ is identically zero. Taylor's bound is then the following:
\[\gtop(K)\geq t(K) := \dim\theta/2 - a(\theta).\]
Since this bound has previously only been explicitly stated in the literature as a bound on the smooth slice genus, we briefly indicate why this is true.
The key ingredient is the existence of the higher-dimensional locally flat analogue of Seifert surfaces;
more precisely, given a locally flat surface $\Sigma\subset S^4$, the existence of a locally flat embedded compact oriented 3-manifold $X\subset S^4$ with boundary $\Sigma$.
Following  \cite[p. XXI]{ranicki}, such an $X$ may be constructed as follows:
let $\nu\Sigma\subset S^4$ denote an open tubular neighborhood of $\Sigma$. We want to
extend the projection $\partial\nu\Sigma \cong S^1\times \Sigma \to S^1$ onto the first factor to a function $\phi: S^4\setminus\nu\Sigma \to S^1$.
Then, because topological transversality holds (cf. \cite[Ch. 9]{quinn} and also \cite[Essay III, \S 1]{ks}),
there is a homotopy making $\phi$ topologically transverse to $1\in S^1$.
Thus $X=\phi^{-1}(1)$ is a locally flat 3-manifold with $X\cap\partial\nu\Sigma$ equal to a push-off of $\Sigma$. To obtain $\phi$, note that by Alexander duality and $S^1$ being a $K(\mathbb{Z}, 1$), we have
\[\mathbb{Z}\cong H_2(\nu \Sigma;\mathbb{Z}) \cong H^1(S^4\setminus \nu \Sigma;\mathbb{Z})\cong [S^4\setminus \nu \Sigma, S^1].\]
So one may take $\phi$ to be a generator for $[S^4\setminus \nu \Sigma, S^1]$.

Given this key ingredient, Taylor's bound can be obtained by a minor generalization to the proof that a topologically slice knot is algebraically slice (see e.g.~\cite[Ch. 8]{lickorish}).
Indeed, given a Seifert surface $F\subseteq S^3$ with Seifert form $\theta$ and a properly embedded locally flat surface $D\subseteq B^4$ cobounding $K$ we can form the closed embedded locally flat surface $\Sigma=F \cup D\subset S^4$.
Take $X$ to be an embedded 3-manifold with boundary $\Sigma$, and let $i:F\to X$ be the inclusion.
Standard homological and linear algebra arguments show that the kernel of $i_*: H_1(F) \to H_1(X)$ is an isotropic subgroup of $\theta$ of required rank.

Taylor's bound is much less well-known than the signatures and the Fox-Milnor condition, most likely because of two reasons: no algorithm to calculate $t(K)$ has been produced so far,
and it seems unlikely that $t(K)$ will be much stronger than all the Levine-Tristram signatures taken together.
Still, we exhibit the following computable lower bound to the slice genus coming from the Seifert form,
which has to the best knowledge of the authors not been stated in the literature before, although it was implicitly used by the first author in \cite{baader}.
This bound arises from Taylor's bound by a straight-forward application of the theory of quadratic forms, for which we refer
to any of the standard textbooks such as the one by Lam \cite{lam}.

\begin{thm}\label{thm:padic}
Let $K$ be a knot with a $2g$--dimensional Seifert form $\theta$. Denote by $\eta$ the integral quadratic form given by $\eta(v) = \theta(v,v)$.
Denote by $\disc \eta = (-1)^{g(2g-1)} \det(\eta)$ the discriminant of $\eta$ (note $|\disc\eta| = \det K$). If there is an odd prime $p$
such that the two following, equivalent conditions are satisfied, then $\gtop(K) \geq 2$:
\begin{enumerate}\renewcommand{\theenumi}{\roman{enumi}}
\item There is an integer $e\geq 0$, an integer $n$, and an integer $r$ not divisible by $p$,
such that $\disc \eta = p^{2e}(np + r^2)$, and the Hasse symbol of $\eta$ over $\mathbb{Q}_p$ is $-1$.
\item The form $\eta_{\mathbb{Q}_p}$ induced by $\eta$ over $\mathbb{Q}_p$ is Witt-equivalent to an anisotropic four-dimensional form.
\end{enumerate}
\end{thm}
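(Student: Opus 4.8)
The plan is to deduce the bound from Taylor's inequality $\gtop(K) \geq t(K) = g - a(\theta)$ by showing that either condition forces $a(\theta) \leq g-2$. The bridge between $\theta$ and the symmetric form $\eta$ is the elementary observation that any subgroup $U \subseteq \mathbb{Z}^{2g}$ on which $\theta$ vanishes identically also satisfies $\eta|_U = 0$, since $\eta(u) = \theta(u,u)$. Hence $U \otimes \mathbb{Q}_p$ is a totally isotropic subspace of the nondegenerate quadratic space $(\mathbb{Q}_p^{2g}, \eta_{\mathbb{Q}_p})$ (nondegenerate because $\disc\eta = \pm\det K \neq 0$), of dimension equal to the rank of $U$. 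The maximal dimension of such a subspace is by definition the Witt index of $\eta_{\mathbb{Q}_p}$, so $a(\theta)$ is at most this Witt index. It therefore suffices to show that condition (ii) forces the Witt index to equal $g-2$.

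First I would record the relevant structure theory over $\mathbb{Q}_p$ for odd $p$: every form of dimension at least $5$ is isotropic, so the anisotropic part $q_{\mathrm{an}}$ of $\eta_{\mathbb{Q}_p}$ has dimension in $\{0,2,4\}$, and a decomposition $\eta_{\mathbb{Q}_p} \cong H^{\oplus m} \perp q_{\mathrm{an}}$ into hyperbolic planes $H$ gives Witt index $m = g - \tfrac12\dim q_{\mathrm{an}}$. Condition (ii) says precisely that $q_{\mathrm{an}}$ is $4$-dimensional, whence $m = g-2$; combined with the previous paragraph this yields $a(\theta) \leq g-2$ and hence $\gtop(K) \geq t(K) \geq 2$, the desired conclusion. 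Note that I only need the inequality $a(\theta) \leq m$, not equality, so no delicate comparison of integral and rational isotropic subspaces is required.

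It remains to prove the equivalence (i) $\Leftrightarrow$ (ii), which is a computation with the classifying invariants over $\mathbb{Q}_p$. There is, up to isomorphism, a unique $4$-dimensional anisotropic form $\phi$ (the norm form of the quaternion division algebra), and it is characterised among $4$-dimensional forms by having square discriminant and Hasse symbol $-1$. Since the signed discriminant is a Witt invariant, and since adding a hyperbolic plane leaves the Hasse symbol of $\phi$ unchanged, as $\varepsilon(\phi \perp H) = \varepsilon(\phi)\,(\disc\phi, -1)_p = -1$ because $\disc\phi$ is a square, an even-dimensional form is Witt-equivalent to $\phi$ \emph{iff} its signed discriminant is a square in $\mathbb{Q}_p$ and its Hasse symbol is $-1$; the only competing case with square discriminant, namely $q_{\mathrm{an}} = 0$, has Hasse symbol $+1$ and is thereby excluded. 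This is the content of (i): the Hasse condition transfers verbatim, and the statement that $\disc\eta$ be a square in $\mathbb{Q}_p$ unwinds, using that $\det K = |\disc\eta|$ is odd, into the elementary condition on $\disc\eta$ recorded in (i), namely that its $p$-adic valuation be even and its prime-to-$p$ part be a square modulo $p$.

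The main obstacle I anticipate is bookkeeping rather than conceptual. One must fix conventions relating the Seifert form $\theta$, the quadratic form $\eta(v) = \theta(v,v)$, and the symmetric bilinear form $\tfrac12(\theta + \theta^{T})$ used to compute the $\mathbb{Q}_p$-invariants, the factors of $2$ being harmless since $p$ is odd, and then verify carefully that \emph{square discriminant together with Hasse symbol $-1$} genuinely singles out the $4$-dimensional anisotropic Witt class and not the trivial one. Getting the discriminant sign convention in $\disc\eta = (-1)^{g(2g-1)}\det(\eta)$ and the behaviour of the Hasse symbol under orthogonal sum exactly right is where the argument must be handled with care.
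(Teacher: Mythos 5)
Your proof follows the same route as the paper's: an isotropic subgroup of $\theta$ spans an isotropic subspace of $\eta_{\mathbb{Q}_p}$, condition (ii) caps the Witt index at $g-2$, and Taylor's bound then gives $\gtop(K)\geq 2$; for (i)$\Leftrightarrow$(ii) the paper simply cites the classification of forms over a local field by discriminant and Hasse symbol, where you spell out the quaternion norm form argument in full. One small point: your unwinding of ``$\disc\eta$ is a square in $\mathbb{Q}_p$'' as ``even valuation and prime-to-$p$ part a quadratic residue mod $p$'' is the correct general condition, whereas the paper's (i) literally demands the unit part be $\equiv 1 \pmod{p}$ --- the two agree for $p=3$ (the only prime used in the paper's applications, since $1$ is the only nonzero square mod $3$), and your version is the one that actually makes (i) and (ii) equivalent for all odd $p$.
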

\begin{proof}
An isotropic subgroup $U$ of rank $a(\theta)$ of $\theta$ gives rise to an isotropic subspace of dimension $a(\theta)$ of $\eta_{\mathbb{Q}_p}$.
If (ii) is satisfied, such a subspace has dimension at most $\dim\eta_{\mathbb{Q}_p} / 2 - 2 = g - 2$. Thus $a(\theta) \leq g - 2 \Rightarrow t(K) \geq 2 \Rightarrow \gtop(K) \geq 2$.
The equivalence of the conditions (i) and (ii) follows from the well-known fact that the Witt-class of a quadratic form over a local field
is determined by its discriminant and Hasse symbol.
\end{proof}
We used scripts \cite{przemyslaw} written for PARI/GP \cite{pari} to test for which knots this lower bound would be applicable, finding the six knots:
\\[-.5\baselineskip]

\begin{center}
\begin{tabular}{|l| p{10cm} |}
\hline
$\gtop(K)=2$ & $12a787$, $12n269$, $12n505$, $12n598$, $12n602$, $12n756$.\\
\hline
\end{tabular}
\end{center}
\vspace{.5\baselineskip}

\noindent For these knots, the topological slice genus is at least $2$, and, by the upper bounds already known, in fact equal to $2$.
For all of them, one takes the odd prime $p$ in \cref{thm:padic} to be $3$. The first two knots have discriminant $117 = 3^2\cdot(4\cdot 3 + 1)$, the last four discriminant $-99 = 3^2\cdot (-4\cdot 3 + 1)$.

Taylor's invariant $t(K)$ is not just a lower bound for the slice genus, but may be considered as knot invariant in its own right, with further noteworthy properties;
e.g., $t(K)$ is an algebraic concordance invariant and equals the minimal slice genus among knots with Seifert form $\theta$. As a side effect of our efforts to compute
slice genera, we have also computed $t(K)$ for all but five knots with up to 12 crossings.
Note that $t(K)$ is bounded below by the Levine-Tristram signatures, the Fox-Milnor condition and the bound from \cref{thm:padic}.
\begin{prop}
For all prime knots $K$ with up to 10 crossings, one has
\begin{equation}\label{eq:taylor=gtop}
t(K) = \gtop(K).
\end{equation}
\end{prop}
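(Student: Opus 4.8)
The plan is to exploit the two-sided squeeze
\[\ell(K) \leq t(K) \leq \gtop(K),\]
where $\ell(K)$ denotes a computable lower bound for Taylor's invariant. Since there are only finitely many prime knots with at most $10$ crossings, the statement reduces to a finite numerical verification. The right-hand inequality is exactly Taylor's bound, established earlier in this section, so it suffices to show the reverse inequality $\gtop(K) \leq t(K)$ for every knot on the list. As the topological slice genus of every prime knot with at most $10$ crossings is already recorded \cite{Knotinfo}, I would treat $\gtop(K)$ as a known quantity and aim to show that a \emph{computable} lower bound for $t$ already attains this value.

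First I would assemble the strongest lower bounds for $t(K)$ that depend only on the Seifert form $\theta$: the bound from the Levine--Tristram signatures, the obstruction coming from the Fox--Milnor condition, and --- crucially --- the $p$-adic obstruction of \cref{thm:padic}. Writing $\ell(K)$ for the maximum of these three, we have $\ell(K) \leq t(K)$ by construction, since each is subsumed by Taylor's bound. I would then run these computations across the finite list and compare the output with the tabulated values of $\gtop$. If $\ell(K) = \gtop(K)$ holds for every prime knot with at most $10$ crossings, then the chain $\gtop(K) = \ell(K) \leq t(K) \leq \gtop(K)$ forces equality throughout, which is the claim.

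The main obstacle is precisely whether this sandwich closes for every knot in the range: because no algorithm computes $t(K)$ directly, the argument breaks down for any knot whose computable lower bounds fall strictly below $\gtop(K)$. For such a hypothetical knot one would instead have to evaluate $t(K)$ by hand, for instance by directly exhibiting a maximal isotropic subgroup of the integral Seifert form and using that $t(K)$ equals the minimal slice genus among knots sharing $\theta$, or else produce a matching upper bound for $\gtop$ via a genus-one concordance or the algebraic genus. Since the genera in this range are small (at most $2$, and almost always $0$ or $1$), I expect the signatures together with the $p$-adic obstruction to pin down every value, so that the verification is routine once the lower bounds $\ell(K)$ have been computed; the genuine content of the proposition is simply that these elementary obstructions already exhaust $t$ in low crossing number.
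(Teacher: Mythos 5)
Your proposal matches the paper's (implicit) argument: the paper proves this proposition precisely by squeezing $t(K)$ between its computable lower bounds (Levine--Tristram signatures, Fox--Milnor, and the $p$-adic obstruction of \cref{thm:padic}) and the upper bound $t(K)\leq\gtop(K)$ from Taylor's inequality, then verifying computationally that the sandwich closes for every prime knot with at most $10$ crossings. The only minor caveat is that $\gtop(10_{152})$ and $\gtop(10_{154})$ are not simply read off from tables but are themselves determined in the paper via genus-one concordances, exactly the fallback you mention.
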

For 11 and 12 crossings, there are 16 exceptions, and 7 potential exceptions to \cref{eq:taylor=gtop}:
\begin{itemize}
\item The following 16 knots are algebraically slice, i.e.~$t(K) = 0$, but their topological sliceness has been obstructed by twisted Alexander polynomials \cite{hkl}
(in fact, they all have topological slice genus equal to $1$):
\begin{center}
$11n45$, $11n145$, $12a169$, $12a596$, $12n31$, $12n132$, $12n210$, $12n221$,\\
$12n224$, $12n264$, $12n536$, $12n681$, $12n731$, $12n812$, $12n813$, $12n841$.
\end{center}
\item We were unable to determine the Taylor invariant of the two knots
\[
12a1142, 12n549.
\]
For both of them, $t(K) \in \{1,2\}$.
For these knots, it is not known whether the topological slice genus is $1$ or $2$.
\item The Taylor invariant of the five knots
\begin{center}
$12a244$, $12a810$, $12a905$, $12n555$, $12n642$
\end{center}
can be shown to be $1$ by explicitly exhibiting an isotropic subgroup of the appropriate rank (cf. \nameref{app:taylor}).
For these knots, it is not known whether the topological slice genus is $1$ or $2$.
\end{itemize}
\section{An obstruction from Donaldson's theorem}\label{sec:Donaldson}
Now we state our obstruction from Donaldson's theorem. This obstruction has previously been applied to help find 2-bridge knots with differing smooth and topological slice genera \cite{fellermccoy}.
\begin{lem}\label{lem:donaldson}
Let $K$ be a knot with with a positive-definite $m \times m$ Goeritz matrix $G$. If $\sigma(K)\leq 0$ and $2\gsm(K)=-\sigma(K)$, then there is an $ (m-\sigma(K))\times m$ integer matrix $M$, such that $G=M^T M$.
\end{lem}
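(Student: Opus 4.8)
The plan is to realize the positive-definite lattice $(\mathbb{Z}^m, G)$ as a sublattice of a standard diagonal lattice produced by Donaldson's theorem. First I would invoke the standard interpretation of the Goeritz matrix: the checkerboard surface underlying $G$, pushed into $B^4$, has a double branched cover $X$ that is a smooth, compact, oriented $4$-manifold with $\partial X = \Sigma_2(K)$, the double cover of $S^3$ branched over $K$, and with intersection form represented by $G$. Since $G$ is positive definite and nonsingular, $X$ is positive definite and $\Sigma_2(K)$ is a rational homology sphere (indeed $|H_1(\Sigma_2(K))| = \det K = |\det G| \neq 0$).

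Next, from a smooth genus-$g$ slice surface $F \subset B^4$ for $K$, where $g = \gsm(K)$, I would form the double cover $Y$ of $B^4$ branched over $F$, again with $\partial Y = \Sigma_2(K)$. A standard computation of Euler characteristics and homology gives $b_1(Y) = 0$ and $b_2(Y) = 2g$, while the $G$-signature theorem (equivalently, the fact that the signature of such a branched cover is $\sigma(K)$) yields $\sigma(Y) = \sigma(K)$. This is the step where the hypothesis does the essential work: the equality $2g = -\sigma(K)$ forces $|\sigma(Y)| = b_2(Y)$, so that $Y$ is negative definite. Without the sharpness of the signature bound, $Y$ would only be semidefinite and the argument would break down.

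I would then glue $X$ to the orientation reversal $\overline{Y}$ along $\Sigma_2(K)$ to obtain a closed, smooth, oriented $4$-manifold $Z = X \cup_{\Sigma_2(K)} \overline{Y}$. Both pieces are now positive definite, and because $\Sigma_2(K)$ is a rational homology sphere, a Mayer--Vietoris argument shows $b_1(Z) = 0$ and identifies the intersection form of $Z$, rationally, with the orthogonal sum of the forms of $X$ and $\overline{Y}$; in particular $Z$ is positive definite of rank $b_2(Z) = m + 2g = m - \sigma(K)$. Donaldson's diagonalization theorem then asserts that the intersection form of $Z$ on $H_2(Z;\mathbb{Z})/\mathrm{torsion} \cong \mathbb{Z}^{m-\sigma(K)}$ is the standard diagonal form, i.e.\ the identity matrix.

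Finally I would read off the matrix $M$ from the inclusion. Since $G$ is nonsingular, the map $H_2(X;\mathbb{Z}) \to H_2(Z;\mathbb{Z})$ is injective on free parts and preserves the intersection form. Choosing a basis of $H_2(X)$ in which the form is $G$ together with the standard orthonormal basis of $H_2(Z)$, this inclusion is recorded by an $(m-\sigma(K)) \times m$ integer matrix $M$, and compatibility of the forms reads exactly $G = M^T M$. The main obstacle is the definiteness step: one must carefully verify the Betti-number and signature computations for the branched cover $Y$, as well as the identification of the intersection form of $X$ with $G$, since it is precisely the sharpness hypothesis $2\gsm(K) = -\sigma(K)$ that renders $Z$ definite and hence makes Donaldson's theorem applicable.
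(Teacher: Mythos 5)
Your proposal is correct and follows essentially the same route as the paper: the Gordon--Litherland $4$-manifold $X$ with form $G$, the double cover $Y$ of $B^4$ branched over a minimal-genus slice surface (rendered definite by the hypothesis $2\gsm(K)=-\sigma(K)$), the closed positive-definite union $Z=X\cup(-Y)$, Donaldson's diagonalization, and the factorization $G=M^TM$ from the induced embedding of intersection forms. You have merely filled in the Betti-number, signature, and Mayer--Vietoris details that the paper leaves implicit.
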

\begin{proof}
The double branched cover $\Sigma(K)$ bounds a 4-manifold $X$ with intersection form given by the matrix $G$ \cite{gordon1978signature}. It also bounds a smooth 4-manifold $Y$ with $b_2(Y)=2\gsm(K)$ and signature $\sigma(K)$ \cite{gordon1978signature,Kauffman76signature}. Suppose that $2\gsm(K)=-\sigma(K)$. This implies that the closed smooth 4-manifold $Z=X\cup (-Y)$ is positive definite. Therefore, $Z$ has intersection form isomorphic to the diagonal lattice $\mathbb{Z}^{m-\sigma(K)}$~\cite{donaldson87orientation}. The inclusion $X\hookrightarrow Z$ induces an injection $H_2(X) \hookrightarrow H_2(Z)$ and hence an embedding of intersection forms. This gives the desired matrix factorization.
\end{proof}
Using this lemma, we find that the following alternating knots cannot have smooth slice genus one, and thus all have smooth slice genus two. In each case, GAP's command \texttt{OrthogonalEmbeddings} was used to find minimal dimension matrix factorizations~\cite{GAP4}.
\\[-.5\baselineskip]

\begin{center}
\begin{tabular}{|l| p{10cm} |}
\hline
$\gsm(K)=2$ &
11a211, 12a244, 12a255, 12a414, 12a534, 12a542, 12a719, 12a810, 12a908, 12a1118, 12a1142, 12a1185\\
\hline
\end{tabular}
\end{center}
\section{The algebraic genus}\label{sec:galg}
The algebraic genus $\galg(K)$ is a knot invariant determined by the S-equivalence class of Seifert forms $\theta$ of a knot $K$, giving an upper bound for $\gtop(K)$.
It was recently introduced by Feller and the first author \cite{fellerlewark}; we refer to that paper for a detailed treatment,
and only briefly state a definition and some properties of $\galg$ here.
A subgroup $U \subset \mathbb{Z}^{\dim \theta}$ is called \emph{Alexander-trivial} if $\det(t\cdot \theta|_{U\times U} - \theta|_{U\times U}^{\top})$
is a unit in $\mathbb{Z}[t^{\pm 1}]$. Let $d$ be the maximal rank of an Alexander-trivial subgroup. Then the algebraic genus of $\theta$ is defined as
\[
\galg(\theta) = \frac{\dim \theta - d}{2},
\]
and the algebraic genus of $K$ is defined as the minimum algebraic genus of a Seifert form of $K$.
At the moment, no way is known to compute $\galg$ for a general knot. However, a randomized algorithm
as in \cite{BaaderFellerLewarkLiechti_15}, implemented in PARI/GP \cite{pari}, gives good upper
bounds for $g_{\text{alg}}$, and thus for $\gtop$. The upper bound for $\gtop$ given by $\galg$
subsumes the bound coming from the algebraic unknotting number $\ualg$ \cite{Murakami}:
\[
\ualg(K) \geq \galg(K) \geq \gtop(K).
\]

We found 19 knots for which the bound given by $\ualg$
is not strong enough, but the algebraic genus determines the previously unknown topological slice genus.
\nameref{app:galg} lists a Seifert matrix and a basis for an Alexander-trivial subgroup for each of those knots.
\\[-.5\baselineskip]

\begin{center}
\begin{tabular}{ | r | p{8.5cm} |}\hline
$\gtop(K) = 1$     & for ${11n80}$. \\\hline
$\gtop(12a_k) = 1$ & for $k \in$ \{{187}, {230}, {317}, {450}, {542}, {570}, {908}, {1118}, {1185}, {1189}, {1208}\}. \\\hline
$\gtop(12n_k) = 1$ & for $k \in$ \{{52}, {63}, {225}, {558}, {665}, {886}\}. \\\hline
$\gtop(K) = 2$     & for $12n276$. \\\hline
\end{tabular}
\end{center}
\vspace{.5\baselineskip}
\begin{figure}[t]
\includegraphics[width=.4\textwidth]{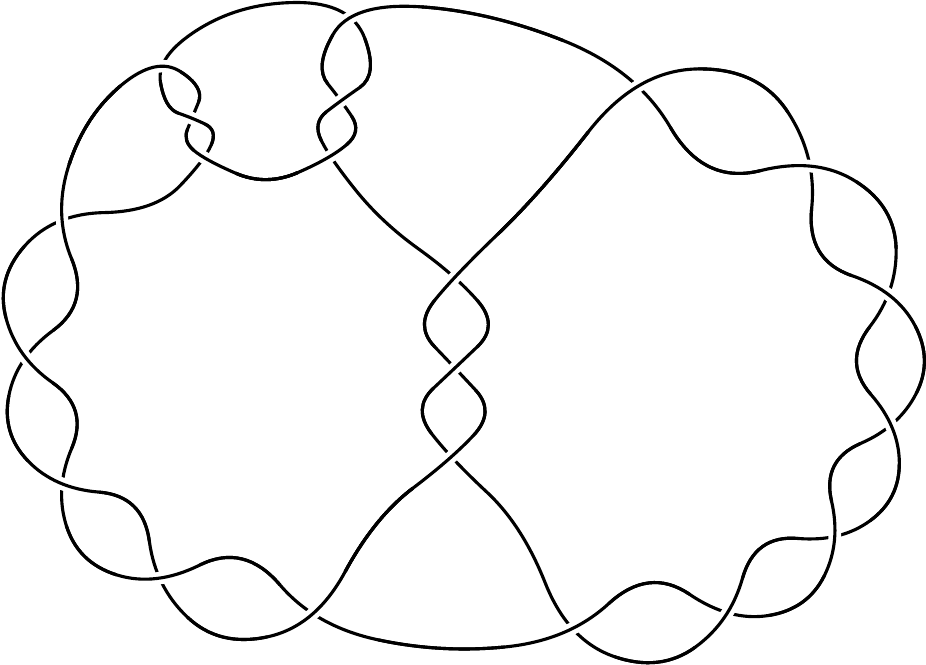}
\caption{A prime knot $K$ with $\gtop(K) = 0$, $\galg(K) > 0$, $\gsm(K) > 0$ (drawn with knotscape \cite{knotscape}).}\label{fig:1}
\end{figure}

Note that the minimum of the algebraic genus and the smooth slice genus is a very efficient upper bound for the topological slice genus of small knots:
\begin{prop}
For all prime knots with up to 11 crossings, one has
\begin{equation}\label{eq:galg}
\gtop(K) = \min \{ \galg(K), \gsm(K) \}
\end{equation}
\end{prop}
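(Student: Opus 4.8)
The plan is to prove the two inequalities separately, observing that one direction is formal and holds for every knot, while the other reduces to a finite verification that rests on the computations assembled in this paper.

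For the bound $\gtop(K) \le \min\{\galg(K),\gsm(K)\}$, I would note that it holds with no restriction on the crossing number. Indeed, $\galg(K) \ge \gtop(K)$ is recorded above as a consequence of Freedman's disk theorem, and $\gsm(K) \ge \gtop(K)$ because any smooth slice surface is in particular locally flat. The minimum of two upper bounds for $\gtop(K)$ is again an upper bound, which gives the claim.

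The content therefore lies in the reverse inequality $\min\{\galg(K),\gsm(K)\} \le \gtop(K)$, equivalently the assertion that for each prime knot $K$ with at most $11$ crossings at least one of $\galg(K)$, $\gsm(K)$ already equals $\gtop(K)$. I would establish this as an exhaustive check over the finitely many such knots. The value $\gtop(K)$ is known for all of them: for at most $10$ crossings this is classical, and for $11$ crossings it is completed by the results of this paper; meanwhile $\galg(K)$ has been bounded above via the randomized algorithm of \cref{sec:galg}, and $\gsm(K)$ is known (see \cite{Knotinfo}) except for the two knots discussed below. Whenever $\gtop(K)=\gsm(K)$ the reverse inequality is immediate, so the only knots that require attention are those with $\gtop(K) < \gsm(K)$; for each of these one confirms from the tabulated data that the computed upper bound for $\galg(K)$ already equals $\gtop(K)$, so that $\galg(K)=\gtop(K)$ and the minimum is realized by the algebraic genus.

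Finally I would treat the two $11$-crossing knots whose smooth genus is not determined, namely $11n34$ and $11n80$ (see \cref{table:unknown}). The knot $11n34$ is the Conway knot, whose Alexander polynomial is trivial; hence its Seifert form $\theta$ satisfies $\det(t\theta-\theta^{\top})=\pm t^{k}$, so the full lattice $\mathbb{Z}^{\dim\theta}$ is Alexander-trivial and $\galg(11n34)=0=\gtop(11n34)$, whence the minimum equals $0$ irrespective of the unknown $\gsm(11n34)$. For $11n80$ the argument of \cref{sec:galg} gives $\galg(11n80)=\gtop(11n80)=1$, and since $\gsm(11n80)\ge\gtop(11n80)=1$ the minimum again equals $\gtop$. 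The main obstacle is not any single step but the dependence of the whole argument on the completeness of the underlying data: one needs the full determination of $\gtop$ for all prime knots up to $11$ crossings together with matching upper bounds for $\galg$, so that the finite verification is genuinely exhaustive rather than conditional. That this equality is a feature of small knots only, rather than a general phenomenon, is exactly what \cref{fig:1} illustrates.
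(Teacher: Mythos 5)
Your proposal is correct and matches the paper's (implicit) argument: the inequality $\gtop \le \min\{\galg,\gsm\}$ is formal, and the reverse is an exhaustive check against the computed values of $\gtop$, $\galg$ and $\gsm$, with the knots of unknown smooth genus ($11n34$, handled via its trivial Alexander polynomial, and $11n80$, handled via $\galg=1$) treated separately. The paper offers no further proof beyond this verification, so there is nothing to add.
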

Indeed, \cref{eq:galg} even holds for all prime knots with up to 12 crossings with the potential exceptions of the 7 knots for which $\gtop$ is unknown (see \cref{table:unknown}).
Note that \cref{eq:galg} need not hold for higher crossing numbers. For example, the knot in \cref{fig:1} is topologically but not smoothly slice (it is concordant to the $(-5,3,-7)$--pretzel knot) and has non-trivial Alexander polynomial (and thus non-zero algebraic genus).

Just as the Taylor invariant, the algebraic genus may be considered as a knot invariant of independent interest. We were able to determine the algebraic
genus for all knots with up to 12 crossings, except for the following 54 knots, all of which
have Alexander polynomial not equal to $1$, but algebraic unknotting number equal to $2$, which implies $\galg \in \{1,2\}$:
\\[-.5\baselineskip]

\begin{center}
\begin{tabular}{|  r  @{\ }p{9.1cm} |} \hline
        & $9_{37}$, $9_{48}$, $10_{74}$, $11a135$, $11a155$, $11a173$, $11a352$, $11n71$, $11n75$, $11n167$ \\\hline
$12a_k$ for $k \in$ & \{164, 166, 177, 244, 265, 298, 396, 413, 493, 503, 735, 769, 810, 873, 895, 905, 1013, 1047, 1142, 1168, 1203, 1211, 1221, 1222, 1225, 1226, 1229, 1230, 1248, 1260, 1283, 1288\} \\\hline
$12n_k$ for $k \in$ & \{334, 379, 388, 460, 480, 495, 549, 583, 737, 813, 846, 869\} \\\hline
\end{tabular}
\end{center}
\vspace{.5\baselineskip}

For all other 2923 prime knots with up 12 crossings, the algebraic genus equals the maximum of two of its lower bounds:
the Taylor invariant and $\lceil \ualg/2\rceil$. A sharp upper bound for the algebraic genus is given in 2324 cases by the algebraic unknotting number, i.e.~$\galg = \ualg$;
in the other 599 cases, we explicitly found an Alexander-trivial subgroup of sufficient rank (for those knots, $\galg = \ualg - 1$).
To avoid overly bloating the appendix, bases for those subgroups are included in a separate text file with the arXiv-version of this article.

\section{Genus one concordances}\label{sec:genusone}
\begin{lem}\label{lem:diagoperations}
Let $K$ and $K'$ be knots in $S^3$. If $K'$ can be obtained from $K$ by one of the following operations:
\begin{enumerate}[label=(\roman*)]
\item changing a single crossing;
\item changing a positive and a negative crossing; or
\item taking oriented resolutions of two crossings,
\end{enumerate}
then $|\gsm(K)-\gsm(K')|\leq 1$ and $|\gtop(K)-\gtop(K')|\leq 1$.
\end{lem}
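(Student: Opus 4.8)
The plan is to reduce the statement to a single geometric fact: if $K$ and $K'$ cobound a connected, oriented, genus-one cobordism $\Sigma$ (smooth, and hence also locally flat) in $S^3\times[0,1]$, then $|\gsm(K)-\gsm(K')|\le 1$ and $|\gtop(K)-\gtop(K')|\le 1$. Granting this, it then suffices to exhibit such a cobordism for each of the three operations.

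For the reduction I would argue by gluing. Let $D'\subset B^4$ be a slice surface for $K'$ of minimal genus $\gsm(K')$ (resp.\ $\gtop(K')$). Identifying $B^4\cup_{S^3}(S^3\times[0,1])$ with $B^4$, the surface $D'\cup_{K'}\Sigma$ is a connected surface in $B^4$ bounded by $K$. An Euler-characteristic count shows that its genus equals $\gsm(K')+1$ (resp.\ $\gtop(K')+1$), so $\gsm(K)\le \gsm(K')+1$, and likewise topologically. Since the construction is symmetric in $K$ and $K'$, this yields the claimed inequalities. Crucially, gluing a smooth cobordism to a smooth slice surface stays smooth, and gluing it to a locally flat slice surface stays locally flat, so the argument runs identically in both categories.

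It remains to produce the genus-one cobordisms. Operation (iii) is immediate: each oriented resolution is a single saddle move, so the two resolutions give a cobordism built from two bands with $\chi=-2$; since a single saddle changes the number of components by one, the intermediate stage is a two-component link, the cobordism is connected, and it therefore has genus one. Operation (i) is the standard crossing-change cobordism: smoothing the crossing (one saddle) carries $K$ to its oriented resolution $K_0$, a two-component link, and a second saddle carries $K_0$ to the knot with the crossing changed; gluing these two pairs-of-pants along $K_0$ gives a connected surface with $\chi=-2$, i.e.\ genus one. Equivalently, a single crossing change is the trace of a homotopy with one transverse double point, and resolving that double point by an ambient tube adds exactly one handle.

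The main obstacle is operation (ii). Here changing a positive and a negative crossing is the trace of a homotopy from $K$ to $K'$ with exactly two transverse double points of opposite sign, giving an immersed annulus in $S^3\times[0,1]$ whose two self-intersections cancel algebraically. The point is that such a canceling pair can be removed by a single ambient surgery---tubing the two sheets together along a Whitney disk pairing the double points---which adds only one handle, rather than the two handles needed to resolve the double points separately. This produces a connected embedded cobordism of genus one, and it is precisely the opposite-sign hypothesis that makes the paired resolution possible. Verifying that this joint resolution costs exactly one unit of genus (the framing obstruction to a genus-zero Whitney move) and that the resulting surface is connected is the crux of the argument; alternatively one may exhibit the cobordism directly as an explicit movie of two band moves passing through a two-component link.
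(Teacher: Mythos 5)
Your reduction (glue a connected genus-one cobordism $\Sigma\subset S^3\times[0,1]$ to a minimal-genus slice surface, in either category) is exactly the paper's argument, and your constructions for operations (i) and (iii) --- two oriented saddles passing through a two-component link, Euler characteristic $-2$, connected, hence genus one --- are correct and in fact more detailed than the paper's one-line assertion that ``in all three cases $K'$ is obtained from $K$ by adjoining two oriented bands.'' Where you diverge is operation (ii), and this is also where your write-up has a real soft spot. The idea of passing through the immersed annulus traced out by the homotopy, with its two transverse double points of opposite sign, is sound, but ``tubing the two sheets together along a Whitney disk'' is not the right move: no Whitney disk is needed (and an embedded, correctly framed one is precisely what you cannot expect in dimension $4$). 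The standard resolution is to tube the two double points together along an embedded \emph{arc} $\delta$ on one sheet of the annulus joining $p_+$ to $p_-$: delete the two small transverse disks of the other sheets around $p_+$ and $p_-$ and replace them by an annulus running parallel to $\delta$ in the boundary of a tubular neighbourhood of $\delta$. The opposite signs of the double points are exactly the condition that the two oriented meridional circles $\partial D_+$ and $\partial D_-$ are anti-parallel meridians of $\delta$ and hence cobound such an embedded oriented annulus in the complement of the surface; the operation removes two disks ($\chi$ drops by $2$) and adds an annulus ($\chi$ unchanged), so it costs exactly one genus and keeps the surface connected. This closes the step you explicitly flag as ``the crux'' and left unverified. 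By contrast, the paper's route produces two explicit oriented bands for all three operations uniformly, which is what feeds directly into the movie descriptions in its Appendix A; your immersed-annulus argument buys a conceptual explanation of \emph{why} the opposite-sign hypothesis in (ii) is what makes a single unit of genus suffice, at the price of having to carry out the tubing construction carefully.
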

\begin{proof}[Proof (sketch).]
In all three cases $K'$ can be obtained from $K$ by adjoining two oriented bands. This shows that in each case, there is a smoothly embedded twice-punctured torus $T\subset S^3 \times [0,1]$ with $\partial T= K\times \{0\} \cup K' \times \{1\}$. Since we can glue $T$ to any properly embedded (smooth or locally flat) surface $F\subset B^4$ with boundary $K$ or $K'$, we see that the desired inequalities hold.
\end{proof}
\afterpage{%
\renewcommand{\arraystretch}{1}
\begin{table}
\begin{center}
\begin{small}
\begin{tabular}{ | l | p{10cm} |}
\hline
$\gsm(11a_k)=1$ & for $k\in \{$1, 102, 107, 108, 109, 110, 111, 118, 119, 125, 126, 128, 13, 130, 131, 132, 133, 134, 135, 137, 14, 141, 145, 147, 148, 15, 151, 152, 153, 154, 155, 156, 157, 158, 159, 16, 162, 163, 166, 17, 170, 171, 172, 173, 174, 175, 176, 178, 18, 181, 183, 185, 188, 19, 193, 197, 199, 202, 205, 21, 214, 217, 218, 219, 221, 228, 229, 23, 231, 232, 239, 24, 248, 249, 25, 251, 252, 253, 254, 258, 26, 262, 265, 268, 269, 27, 270, 271, 273, 274, 277, 278, 279, 281, 284, 285, 288, 29, 294, 296, 297, 3, 30, 301, 303, 305, 312, 313, 314, 315, 317, 32, 322, 323, 324, 325, 327, 33, 331, 332, 333, 347, 349, 350, 352, 37, 38, 39, 4, 42, 44, 45, 46, 47, 50, 51, 52, 54, 55, 57, 59, 6, 61, 65, 66, 67, 68, 7, 72, 75, 76, 79, 81, 84, 85, 89, 90, 92, 93, 97, 99$\}$. \\
      \hline

$\gsm(11n_k)=1$ & for $k\in \{$102, 11, 112, 113, 115, 117, 119, 120, 127, 128, 129, 138, 140, 142, 146, 148, 15, 150, 155, 157, 160, 161, 162, 163, 165, 166, 167, 168, 17, 170, 177, 178, 179, 182, 24, 29, 3, 32, 33, 36, 40, 44, 46, 5, 51, 54, 58, 6, 60, 65, 66, 7, 79, 91, 92, 94, 98, 99$\}$.
      \\ \hline

$\gsm(11a_k)=2$ & for $k\in \{$105, 144, 161, 20, 293, 304, 346, 49, 53, 60, 63, 64, 83$\}$. \\
\hline
$\gsm(11n_k)=2$ & for $k\in \{$133, 137, 173, 23, 30$\}$.
      \\ \hline

$\gsm(12a_k)=1$ & for $k\in \{$4, 10, 39, 45, 49, 50, 65, 66, 76, 86, 89, 103, 104, 108, 120, 125, 127, 128, 129, 135, 150, 161, 163, 164, 166, 168, 175, 177, 178, 181, 194, 196, 200, 204, 212, 247, 259, 260, 265, 291, 292, 296, 298, 302, 312, 327, 338, 339, 342, 353, 354, 357, 364, 372, 376, 379, 380, 381, 395, 396, 399, 400, 412\footnotemark,413, 423, 424, 434, 436, 438, 448, 449, 454, 459, 462, 463, 465, 468, 481, 482, 489, 493, 494, 496, 503, 505, 544, 545, 549, 554, 564, 581, 582, 597, 598, 601, 609, 621, 634, 639, 642, 643, 644, 649, 665, 668, 669, 677, 680, 684, 687, 689, 690, 691, 704, 706, 735, 749, 750, 752, 757, 767, 769, 771, 783, 784, 789, 791, 815, 816, 818, 824, 825, 826, 827, 833, 835, 842, 845, 852, 853, 862, 870, 871, 873, 878, 886, 895, 896, 898, 899, 901, 911, 912, 914, 916, 921, 939, 940, 941, 942, 957, 971, 981, 989, 999, 1000, 1012, 1014, 1016, 1025, 1028, 1039, 1040, 1050, 1061, 1066, 1085, 1095, 1103, 1109, 1110, 1124, 1127, 1138, 1145, 1147, 1148, 1149, 1150, 1151, 1160, 1163, 1165, 1171, 1174, 1175, 1179, 1194, 1200, 1201, 1205, 1226, 1254, 1256, 1259, 1275, 1279, 1281, 1282, 1284, 1285, 1288$\}$. \\
      \hline
$\gsm(12n_k)=1$ &  for $k\in \{$47, 60, 61, 75, 80, 84, 92, 101, 109, 115, 116, 118, 137, 140, 147, 157, 159, 167, 171, 176, 192, 193, 197, 200, 202, 206, 208, 211, 212, 216, 219, 227, 236, 247, 248, 253, 258, 260, 267, 270, 291, 304, 307, 324, 334, 345, 351, 359, 376, 379, 383, 388, 391, 396, 409, 410, 411, 439, 442, 443, 451, 454, 456, 460, 469, 475, 480, 489, 495, 500, 514, 519, 520, 522, 524, 525, 531, 532, 537, 543, 554, 564, 569, 577, 583, 595\footnotemark[\value{footnote}], 596, 601, 606, 608, 621, 630, 631, 672, 673, 675, 678, 681, 685, 699, 701, 717, 726, 730, 735, 737, 742, 759, 769, 777, 783, 794, 797, 804, 805, 808, 809, 811, 813, 814, 815, 818, 822, 824, 829, 833, 844, 846, 854, 855, 856, 859, 861, 862, 869, 873, 875$\}$.
\\ \hline
$\gsm(12a_k)=2$ & for $k\in \{$75, 147, 148, 160, 167, 193, 195, 231, 289, 311, 370, 375, 580, 692, 693, 725, 730, 741, 812, 841, 967, 983, 988, 1115, 1116, 1278, 1286$\}$.
\\ \hline
$\gsm(12n_k)=2$ & for $k\in \{$113, 190, 204, 233, 441, 496, 626, 698, 700, 707, 734, 796, 863, 867$\}$.
      \\ \hline
\end{tabular}
\end{small}
\end{center}
\vspace{.5ex}
\caption{Knots whose smooth slice genus could be determined by genus one cobordisms.}
\label{table:g1}
\end{table}
\footnotetext{\label{note1}$12a412$ and $12n595$ are both Gordian distance one from a knot with the same Jones polynomial as $11n50$, $11n132$ and $12n414$ which are all slice, and $11n133$, which is not. It can be verified that the knot obtained in both cases is, in fact $11n50$, confirming that the smooth slice genus is one.}
\clearpage}
The above lemma was used to generate upper bounds for the slice genus as follows. For a given knot $K$, we take a diagram $D$ and obtain a new diagram $D'$ by performing one of the three given operations. By considering the Jones polynomial and the crossing number of $D'$, we obtain a small set of possibilities, $S$, for the knot represented by $D'$. This gives the following upper bound on the smooth slice genus:
\begin{equation}\label{eq:bound}
\gsm(K)\leq 1 + \max_{K' \in S} \gsm(K').
\end{equation}

\begin{rem}
In practice, we found that the combination of the Jones polynomials and the bound on crossing number was often sufficient to identify $D'$ exactly.
However, even when the set $S$ contains more than one knot, we often found that all the knots in $S$ had the same smooth slice genus.
\end{rem}

Using a computer, we applied the above method to each knot where the smooth slice genus was not listed on KnotInfo \cite{Knotinfo}. Comparing the resulting upper bounds with previously known lower bounds allowed to deduce the exact value for the knots listed below. In \nameref{app:moves}, we state the operation performed and the resulting set $S$ which gave the desired upper bound for each knot.

\subsection{The smooth slice genus of 11- and 12-crossing knots}\label{sec:smoothg11}
Using the methods outlined above, the value of the smooth genus can be determined for the knots listed in \cref{table:g1}. In all cases, this means that the smooth genus is equal to the previously known lower bound.
Although it does not determine the value completely, we also obtain a new upper bound for $\gsm(12n555)\leq 2$. This is done by observing $12n555$ can be transformed into $9_{48}$, which has $\gsm(9_{48})=1$, by a crossing change.

\subsection{The topological slice genus of 10-crossing knots}\label{sec:10top}
The only 10-crossing knots for which the topological genera are not known are $10_{152}$ and $10_{154}$. We can determine for these by obtaining them by a crossing change from knots for which the topological genus is already known.\\[-.5\baselineskip]

\noindent\hfill\begin{tabular}{ | l | p{9.5cm} |}
    \hline
    $\gtop(10_{152})= 3$ &
     We can obtain $10_{152}$ by performing a single crossing change in a 12-crossing diagram for $12n750$. Since $\gtop(12n750)=2$ and $\gtop(10_{152})\geq 3$, this implies that $\gtop(10_{152})= 3$.
    \\ \hline
    $\gtop(10_{154})= 2$ &
    We can obtain $10_{154}$ by performing a single crossing change in a 12-crossing diagram for $12n321$. Since $\gtop(12n321)=1$ and $\gtop(10_{154})\geq 2$, this implies that $\gtop(10_{154})= 2$.
    \\ \hline
\end{tabular}\hfill\mbox{}
\vspace{.5\baselineskip}

Note that this uses the topological genera of $12n321$ and $12n750$ which were recently determined by Feller \cite{feller15degree}.

\section*{Acknowledgments}
The second author would like to thank his supervisor, Brendan Owens, for his continued guidance. Both authors would like to thank Peter Feller for the conversations that led to the undertaking of this project.

\bibliographystyle{plain}
\bibliography{gencomp}
\addresseshere
\twocolumn
\begin{appendix}
\renewcommand{\arraystretch}{1}
\section*{Appendix A}\label{app:moves}
\begin{small}
For each knot $K$ in \cref{table:g1}, we state an operation from \cref{lem:diagoperations} along with the resulting family $S$, which can be used to give the required bound on $\gsm(K)$. In all cases we used only the diagram corresponding to the PD notation listed on KnotInfo \cite{Knotinfo}.
In this table,
\begin{itemize}
\item`cc'=`crossing change',
\item`+-cc'=`a positive and a negative crossing change'
\item and `res'=`oriented resolutions of\linebreak two crossings'.\\[\baselineskip]
\end{itemize}

\tablehead{\hline Knot & \tiny Operation & $S$\\ \hline}
\tabletail{\hline}

\newcolumntype{P}[1]{>{\centering\arraybackslash}p{#1}}
\begin{supertabular}{|p{1.2cm} | P{1.2cm} | p{2.6cm} |}
11a1 & +-cc & [U]\\
11a3 & +-cc & [U]\\
11a4 & +-cc & [U]\\
11a6 & +-cc & [U]\\
11a7 & +-cc & [U]\\
11a13 & +-cc & [6\_1]\\
11a14 & +-cc & [3\_1\#-(3\_1)]\\
11a15 & +-cc & [3\_1\#-(3\_1)]\\
11a16 & +-cc & [6\_1]\\
11a17 & +-cc & [6\_1]\\
11a18 & cc & [8\_8]\\
11a19 & +-cc & [U]\\
11a20 & +-cc & [3\_1]\\
11a21 & +-cc & [U]\\
11a23 & +-cc & [U]\\
11a24 & +-cc & [6\_1]\\
11a25 & +-cc & [U]\\
11a26 & +-cc & [6\_1]\\
11a27 & +-cc & [U]\\
11a29 & +-cc & [6\_1]\\
11a30 & +-cc & [U]\\
11a32 & +-cc & [U]\\
11a33 & +-cc & [6\_1]\\
11a37 & +-cc & [6\_1]\\
11a38 & +-cc & [6\_1]\\
11a39 & +-cc & [6\_1]\\
11a42 & +-cc & [U]\\
11a44 & +-cc & [U]\\
11a45 & +-cc & [6\_1]\\
11a46 & +-cc & [U]\\
11a47 & +-cc & [U]\\
11a49 & +-cc & [5\_2]\\
11a50 & +-cc & [U]\\
11a51 & cc & [8\_9, 4\_1\#4\_1]\\
11a52 & +-cc & [U]\\
11a53 & +-cc & [3\_1]\\
11a54 & +-cc & [U]\\
11a55 & +-cc & [U]\\
11a57 & +-cc & [U]\\
11a59 & +-cc & [6\_1]\\
11a60 & +-cc & [5\_2]\\
11a61 & res & [6\_1]\\
11a63 & +-cc & [5\_2]\\
11a64 & +-cc & [3\_1]\\
11a65 & +-cc & [U]\\
11a66 & +-cc & [U]\\
11a67 & +-cc & [U]\\
11a68 & +-cc & [U]\\
11a72 & +-cc & [3\_1\#-(3\_1)]\\
11a75 & +-cc & [6\_1]\\
11a76 & +-cc & [U]\\
11a79 & +-cc & [U]\\
11a81 & +-cc & [U]\\
11a83 & +-cc & [3\_1]\\
11a84 & +-cc & [6\_1]\\
11a85 & +-cc & [U]\\
11a89 & +-cc & [6\_1]\\
11a90 & +-cc & [U]\\
11a92 & +-cc & [U]\\
11a93 & +-cc & [U]\\
11a97 & +-cc & [6\_1]\\
11a99 & +-cc & [6\_1]\\
11a102 & +-cc & [6\_1]\\
11a105 & +-cc & [3\_1]\\
11a107 & +-cc & [U]\\
11a108 & +-cc & [U]\\
11a109 & +-cc & [3\_1\#-(3\_1)]\\
11a110 & +-cc & [6\_1]\\
11a111 & +-cc & [U]\\
11a118 & +-cc & [U]\\
11a119 & +-cc & [6\_1]\\
11a125 & +-cc & [U]\\
11a126 & +-cc & [3\_1\#-(3\_1)]\\
11a128 & +-cc & [6\_1]\\
11a130 & +-cc & [U]\\
11a131 & +-cc & [U]\\
11a132 & +-cc & [U]\\
11a133 & +-cc & [U]\\
11a134 & cc & [8\_8]\\
11a135 & +-cc & [6\_1]\\
11a137 & +-cc & [6\_1]\\
11a141 & +-cc & [6\_1]\\
11a144 & +-cc & [7\_2]\\
11a145 & res & [6\_1]\\
11a147 & +-cc & [U]\\
11a148 & res & [6\_1]\\
11a151 & +-cc & [8\_20]\\
11a152 & +-cc & [U]\\
11a153 & +-cc & [U]\\
11a154 & cc & [6\_1]\\
11a155 & cc & [8\_20]\\
11a156 & +-cc & [8\_20]\\
11a157 & +-cc & [U]\\
11a158 & +-cc & [U]\\
11a159 & +-cc & [U]\\
11a161 & +-cc & [7\_6]\\
11a162 & +-cc & [U]\\
11a163 & +-cc & [3\_1\#-(3\_1)]\\
11a166 & +-cc & [U]\\
11a170 & +-cc & [U]\\
11a171 & +-cc & [U]\\
11a172 & +-cc & [U]\\
11a173 & cc & [8\_20]\\
11a174 & +-cc & [U]\\
11a175 & +-cc & [U]\\
11a176 & +-cc & [U]\\
11a178 & +-cc & [U]\\
11a181 & +-cc & [6\_1]\\
11a183 & +-cc & [U]\\
11a185 & +-cc & [U]\\
11a188 & +-cc & [6\_1]\\
11a193 & +-cc & [U]\\
11a197 & cc & [8\_8]\\
11a199 & +-cc & [6\_1]\\
11a202 & +-cc & [6\_1]\\
11a205 & +-cc & [U]\\
11a214 & +-cc & [6\_1]\\
11a217 & +-cc & [U]\\
11a218 & +-cc & [U]\\
11a219 & res & [6\_1]\\
11a221 & +-cc & [8\_20]\\
11a228 & +-cc & [U]\\
11a229 & +-cc & [U]\\
11a231 & +-cc & [U]\\
11a232 & +-cc & [U]\\
11a239 & +-cc & [U]\\
11a248 & +-cc & [U]\\
11a249 & +-cc & [U]\\
11a251 & +-cc & [3\_1\#-(3\_1)]\\
11a252 & +-cc & [3\_1\#-(3\_1)]\\
11a253 & +-cc & [6\_1]\\
11a254 & +-cc & [3\_1\#-(3\_1)]\\
11a258 & +-cc & [6\_1]\\
11a262 & +-cc & [U]\\
11a265 & +-cc & [U]\\
11a268 & +-cc & [U]\\
11a269 & +-cc & [U]\\
11a270 & +-cc & [U]\\
11a271 & +-cc & [U]\\
11a273 & +-cc & [U]\\
11a274 & +-cc & [U]\\
11a277 & +-cc & [U]\\
11a278 & +-cc & [6\_1]\\
11a279 & +-cc & [U]\\
11a281 & +-cc & [6\_1]\\
11a284 & +-cc & [U]\\
11a285 & +-cc & [U]\\
11a288 & +-cc & [U]\\
11a293 & +-cc & [5\_2]\\
11a294 & +-cc & [U]\\
11a296 & res & [6\_1]\\
11a297 & +-cc & [U]\\
11a301 & +-cc & [U]\\
11a303 & +-cc & [U]\\
11a304 & +-cc & [5\_2]\\
11a305 & +-cc & [U]\\
11a312 & res & [3\_1\#-(3\_1)]\\
11a313 & +-cc & [U]\\
11a314 & +-cc & [U]\\
11a315 & +-cc & [U]\\
11a317 & +-cc & [U]\\
11a322 & +-cc & [U]\\
11a323 & +-cc & [6\_1]\\
11a324 & res & [6\_1]\\
11a325 & +-cc & [U]\\
11a327 & cc & [8\_20]\\
11a331 & +-cc & [U]\\
11a332 & +-cc & [U]\\
11a333 & +-cc & [U]\\
11a346 & +-cc & [3\_1]\\
11a347 & +-cc & [U]\\
11a349 & +-cc & [U]\\
11a350 & +-cc & [6\_1]\\
11a352 & cc & [6\_1]\\
11n3 & +-cc & [U]\\
11n5 & +-cc & [U]\\
11n6 & +-cc & [U]\\
11n7 & +-cc & [U]\\
11n11 & +-cc & [U]\\
11n15 & +-cc & [U]\\
11n17 & res & [U]\\
11n23 & +-cc & [5\_2]\\
11n24 & +-cc & [U]\\
11n29 & +-cc & [U]\\
11n30 & +-cc & [5\_2]\\
11n32 & +-cc & [6\_1]\\
11n33 & +-cc & [6\_1]\\
11n36 & +-cc & [U]\\
11n40 & res & [U]\\
11n44 & +-cc & [U]\\
11n46 & res & [U]\\
11n51 & +-cc & [U]\\
11n54 & res & [U]\\
11n58 & +-cc & [3\_1\#-(3\_1)]\\
11n60 & +-cc & [U]\\
11n65 & cc & [3\_1\#-(3\_1)]\\
11n66 & +-cc & [U]\\
11n79 & +-cc & [6\_1]\\
11n91 & res & [U]\\
11n92 & +-cc & [U]\\
11n94 & +-cc & [U]\\
11n98 & cc & [3\_1\#-(3\_1)]\\
11n99 & res & [U]\\
11n102 & +-cc & [U]\\
11n112 & +-cc & [U]\\
11n113 & res & [U]\\
11n115 & +-cc & [U]\\
11n117 & +-cc & [6\_1]\\
11n119 & +-cc & [6\_1]\\
11n120 & +-cc & [U]\\
11n127 & res & [U]\\
11n128 & +-cc & [6\_1]\\
11n129 & +-cc & [U]\\
11n133 & +-cc & [3\_1]\\
11n137 & res & [6\_2]\\
11n138 & +-cc & [6\_1]\\
11n140 & res & [6\_1]\\
11n142 & +-cc & [6\_1]\\
11n146 & res & [U]\\
11n148 & res & [8\_9, 4\_1\#4\_1]\\
11n150 & +-cc & [U]\\
11n155 & +-cc & [6\_1]\\
11n157 & +-cc & [U]\\
11n160 & +-cc & [U]\\
11n161 & +-cc & [U]\\
11n162 & res & [U]\\
11n163 & +-cc & [U]\\
11n165 & +-cc & [U]\\
11n166 & +-cc & [U]\\
11n167 & res & [6\_1]\\
11n168 & +-cc & [U]\\
11n170 & res & [6\_1]\\
11n173 & +-cc & [5\_2]\\
11n177 & +-cc & [U]\\
11n178 & res & [U]\\
11n179 & +-cc & [U]\\
11n182 & +-cc & [U]\\
12a4 & +-cc & [6\_1]\\
12a10 & +-cc & [6\_1]\\
12a39 & +-cc & [8\_20]\\
12a45 & +-cc & [3\_1\#-(3\_1)]\\
12a49 & res & [8\_8]\\
12a50 & +-cc & [8\_20]\\
12a65 & +-cc & [3\_1\#-(3\_1)]\\
12a66 & +-cc & [3\_1\#-(3\_1)]\\
12a75 & +-cc & [7\_4]\\
12a76 & res & [6\_1]\\
12a86 & res & [8\_8]\\
12a89 & cc & [8\_8]\\
12a103 & +-cc & [6\_1]\\
12a104 & cc & [8\_8]\\
12a108 & +-cc & [8\_20]\\
12a120 & +-cc & [8\_20]\\
12a125 & +-cc & [6\_1]\\
12a127 & +-cc & [6\_1]\\
12a128 & +-cc & [6\_1]\\
12a129 & +-cc & [3\_1\#-(3\_1)]\\
12a135 & +-cc & [6\_1]\\
12a147 & +-cc & [7\_4]\\
12a148 & +-cc & [7\_4]\\
12a150 & res & [8\_8]\\
12a160 & cc & [10\_77]\\
12a161 & res & [8\_8]\\
12a163 & res & [8\_8]\\
12a164 & +-cc & [8\_20]\\
12a166 & +-cc & [8\_20]\\
12a167 & cc & [3\_1\#-(3\_1)\#3\_1]\\
12a168 & cc & [3\_1\#-(3\_1)]\\
12a175 & +-cc & [6\_1]\\
12a177 & +-cc & [6\_1]\\
12a178 & +-cc & [6\_1]\\
12a181 & +-cc & [6\_1]\\
12a193 & +-cc & [8\_6]\\
12a194 & res & [10\_22, 10\_35]\\
12a195 & +-cc & [8\_6]\\
12a196 & cc & [10\_22, 10\_35]\\
12a200 & +-cc & [9\_46]\\
12a204 & +-cc & [6\_1]\\
12a212 & +-cc & [8\_20]\\
12a231 & +-cc & [7\_4]\\
12a247 & cc & [10\_129, 8\_8]\\
12a259 & +-cc & [6\_1]\\
12a260 & +-cc & [3\_1\#-(3\_1)]\\
12a265 & +-cc & [6\_1]\\
12a289 & +-cc & [7\_4]\\
12a291 & +-cc & [6\_1]\\
12a292 & +-cc & [8\_20]\\
12a296 & +-cc & [6\_1]\\
12a298 & +-cc & [8\_20]\\
12a302 & cc & [8\_8]\\
12a311 & +-cc & [7\_4]\\
12a312 & res & [6\_1]\\
12a327 & cc & [8\_8]\\
12a338 & +-cc & [6\_1]\\
12a339 & res & [6\_1]\\
12a342 & +-cc & [6\_1]\\
12a353 & cc & [8\_8]\\
12a354 & res & [8\_8]\\
12a357 & +-cc & [6\_1]\\
12a364 & +-cc & [8\_20]\\
12a370 & +-cc & [7\_4]\\
12a372 & res & [8\_8]\\
12a375 & res & [3\_1\#-(3\_1)\#3\_1]\\
12a376 & res & [3\_1\#-(3\_1)]\\
12a379 & res & [3\_1\#-(3\_1)]\\
12a380 & cc & [10\_129, 8\_8]\\
12a381 & +-cc & [6\_1]\\
12a395 & res & [8\_8]\\
12a396 & +-cc & [8\_20]\\
12a399 & res & [8\_8]\\
12a400 & cc & [9\_41]\\
12a413 & +-cc & [8\_20]\\
12a423 & res & [8\_8]\\
12a424 & +-cc & [3\_1\#-(3\_1)]\\
12a434 & +-cc & [8\_20]\\
12a436 & +-cc & [3\_1\#-(3\_1)]\\
12a438 & +-cc & [6\_1]\\
12a448 & +-cc & [6\_1]\\
12a449 & +-cc & [6\_1]\\
12a454 & +-cc & [6\_1]\\
12a459 & cc & [9\_46]\\
12a462 & +-cc & [3\_1\#-(3\_1)]\\
12a463 & +-cc & [6\_1]\\
12a465 & +-cc & [6\_1]\\
12a468 & +-cc & [8\_20]\\
12a481 & cc & [11n49]\\
12a482 & +-cc & [6\_1]\\
12a489 & +-cc & [6\_1]\\
12a493 & +-cc & [6\_1]\\
12a494 & +-cc & [6\_1]\\
12a496 & cc & [10\_129, 8\_8]\\
12a503 & cc & [10\_75]\\
12a505 & res & [8\_9, 4\_1\#4\_1]\\
12a544 & +-cc & [3\_1\#-(3\_1)]\\
12a545 & cc & [8\_8]\\
12a549 & +-cc & [6\_1]\\
12a554 & +-cc & [6\_1]\\
12a564 & res & [6\_1]\\
12a580 & cc & [10\_12]\\
12a581 & res & [3\_1\#-(3\_1)]\\
12a582 & res & [3\_1\#-(3\_1)]\\
12a597 & +-cc & [6\_1]\\
12a598 & +-cc & [8\_20]\\
12a601 & res & [6\_1]\\
12a609 & res & [8\_8]\\
12a621 & +-cc & [8\_20]\\
12a634 & res & [3\_1\#-(3\_1)]\\
12a639 & cc & [10\_87]\\
12a642 & +-cc & [8\_20]\\
12a643 & +-cc & [10\_129, 8\_8]\\
12a644 & +-cc & [10\_129, 8\_8]\\
12a649 & +-cc & [10\_129, 8\_8]\\
12a665 & +-cc & [6\_1]\\
12a668 & +-cc & [3\_1\#-(3\_1)]\\
12a669 & +-cc & [10\_129, 8\_8]\\
12a677 & cc & [8\_8]\\
12a680 & cc & [10\_87]\\
12a684 & +-cc & [6\_1]\\
12a687 & +-cc & [6\_1]\\
12a689 & +-cc & [6\_1]\\
12a690 & +-cc & [6\_1]\\
12a691 & +-cc & [6\_1]\\
12a692 & cc &  [3\_1\#-(3\_1)\#3\_1]\\
12a693 & +-cc & [8\_6]\\
12a704 & +-cc & [10\_129, 8\_8]\\
12a706 & +-cc & [3\_1\#-(3\_1)]\\
12a725 & res & [6\_2]\\
12a730 & +-cc & [7\_4]\\
12a735 & res & [6\_1]\\
12a741 & +-cc & [7\_4]\\
12a749 & res & [3\_1\#-(3\_1)]\\
12a750 & res & [6\_1]\\
12a752 & +-cc & [6\_1]\\
12a757 & cc & [8\_20]\\
12a767 & res & [6\_1]\\
12a769 & +-cc & [6\_1]\\
12a771 & res & [8\_8]\\
12a783 & +-cc & [6\_1]\\
12a784 & +-cc & [3\_1\#-(3\_1)]\\
12a789 & +-cc & [3\_1\#-(3\_1)]\\
12a791 & +-cc & [10\_129, 8\_8]\\
12a812 & +-cc & [7\_4]\\
12a815 & cc & [5\_1\#-(5\_1)]\\
12a816 & +-cc & [10\_129, 8\_8]\\
12a818 & +-cc & [3\_1\#-(3\_1)]\\
12a824 & cc & \footnotesize [10\_48, 5\_2\#-(5\_2)]\\
12a825 & +-cc & [8\_20]\\
12a826 & +-cc & [10\_129, 8\_8]\\
12a827 & +-cc & [3\_1\#-(3\_1)]\\
12a833 & cc & [10\_87]\\
12a835 & cc & \footnotesize [10\_48, 5\_2\#-(5\_2)]\\
12a841 & res & [6\_2]\\
12a842 & +-cc & [6\_1]\\
12a845 & +-cc & [3\_1\#-(3\_1)]\\
12a852 & cc & [3\_1\#-(3\_1)]\\
12a853 & cc & [3\_1\#-(3\_1)]\\
12a862 & res & [8\_8]\\
12a870 & cc & [8\_20]\\
12a871 & cc & [8\_20]\\
12a873 & +-cc & [8\_20]\\
12a878 & +-cc & [8\_20]\\
12a886 & res & [8\_9, 4\_1\#4\_1]\\
12a895 & cc & [10\_87]\\
12a896 & +-cc & [3\_1\#-(3\_1)]\\
12a898 & +-cc & [8\_20]\\
12a899 & +-cc & [10\_129, 8\_8]\\
12a901 & +-cc & [8\_20]\\
12a911 & cc & [10\_22, 10\_35]\\
12a912 & +-cc & [6\_1]\\
12a914 & res & [8\_8]\\
12a916 & cc & [3\_1\#-(3\_1)]\\
12a921 & res & [8\_9, 4\_1\#4\_1]\\
12a939 & res & [10\_3]\\
12a940 & +-cc & [6\_1]\\
12a941 & res & [10\_22, 10\_35]\\
12a942 & +-cc & \tiny[10\_137, 10\_155, 11n37]\\
12a957 & res & [8\_8]\\
12a967 & +-cc & [7\_4]\\
12a971 & +-cc & [6\_1]\\
12a981 & +-cc & [8\_20]\\
12a983 & +-cc & [7\_4]\\
12a988 & +-cc & [7\_4]\\
12a989 & res & [8\_8]\\
12a999 & +-cc & [8\_20]\\
12a1000 & +-cc & [8\_20]\\
12a1012 & +-cc & [8\_9, 4\_1\#4\_1]\\
12a1014 & +-cc & [6\_1]\\
12a1016 & +-cc & [6\_1]\\
12a1025 & res & [10\_22, 10\_35]\\
12a1028 & +-cc & [8\_20]\\
12a1039 & +-cc & [6\_1]\\
12a1040 & +-cc & [6\_1]\\
12a1050 & +-cc & [6\_1]\\
12a1061 & res & [9\_27]\\
12a1066 & cc & [10\_22, 10\_35]\\
12a1085 & +-cc & [6\_1]\\
12a1095 & +-cc & [6\_1]\\
12a1103 & res & [8\_8]\\
12a1109 & res & [9\_27]\\
12a1110 & res & [9\_41]\\
12a1115 & +-cc & [7\_4]\\
12a1116 & +-cc & [7\_4]\\
12a1124 & +-cc & [8\_9, 4\_1\#4\_1]\\
12a1127 & res & [6\_1]\\
12a1138 & +-cc & [6\_1]\\
12a1145 & cc & [10\_22, 10\_35]\\
12a1147 & cc & [10\_22, 10\_35]\\
12a1148 & +-cc & [6\_1]\\
12a1149 & +-cc & [6\_1]\\
12a1150 & +-cc & [6\_1]\\
12a1151 & +-cc & [6\_1]\\
12a1160 & res & [6\_1]\\
12a1163 & cc & [10\_3]\\
12a1165 & cc & [10\_3]\\
12a1171 & +-cc & [6\_1]\\
12a1174 & +-cc & [8\_20]\\
12a1175 & res & [9\_27]\\
12a1179 & +-cc & [6\_1]\\
12a1194 & cc & [10\_129, 8\_8]\\
12a1200 & +-cc & [6\_1]\\
12a1201 & +-cc & [8\_20]\\
12a1205 & +-cc & [10\_129, 8\_8]\\
12a1226 & +-cc & [8\_20]\\
12a1254 & +-cc & [8\_9, 4\_1\#4\_1]\\
12a1256 & +-cc & [6\_1]\\
12a1259 & +-cc & [6\_1]\\
12a1275 & +-cc & [6\_1]\\
12a1278 & +-cc & [6\_2]\\
12a1279 & +-cc & [6\_1]\\
12a1281 & res & [3\_1\#-(3\_1)]\\
12a1282 & cc & [10\_3]\\
12a1284 & +-cc & [8\_9, 4\_1\#4\_1]\\
12a1285 & +-cc & [8\_9, 4\_1\#4\_1]\\
12a1286 & +-cc & [8\_4]\\
12a1288 & +-cc & [8\_9, 4\_1\#4\_1]\\
12n47 & +-cc & [6\_1]\\
12n60 & +-cc & [3\_1\#-(3\_1)]\\
12n61 & +-cc & [3\_1\#-(3\_1)]\\
12n75 & +-cc & [3\_1\#-(3\_1)]\\
12n80 & cc & [8\_20]\\
12n84 & cc & [3\_1\#-(3\_1)]\\
12n92 & +-cc & [3\_1\#-(3\_1)]\\
12n101 & +-cc & [3\_1\#-(3\_1)]\\
12n109 & cc & [8\_20]\\
12n113 & res & [3\_1]\\
12n115 & cc & [10\_153]\\
12n116 & res & [U]\\
12n118 & res & [U]\\
12n137 & +-cc & [3\_1\#-(3\_1)]\\
12n140 & res & [8\_20]\\
12n147 & cc & [8\_8]\\
12n157 & res & [U]\\
12n159 & res & [U]\\
12n167 & +-cc & [10\_129, 8\_8]\\
12n171 & res & [U]\\
12n176 & res & [U]\\
12n190 & res & [8\_21]\\
12n192 & cc & [10\_153]\\
12n193 & res & [U]\\
12n197 & res & [8\_8]\\
12n200 & res & [6\_1]\\
12n202 & res & [8\_8]\\
12n204 & +-cc & [7\_4]\\
12n206 & +-cc & [6\_1]\\
12n208 & res & [U]\\
12n211 & +-cc & [6\_1]\\
12n212 & res & [U]\\
12n216 & cc & [8\_8]\\
12n219 & +-cc & [3\_1\#-(3\_1)]\\
12n227 & +-cc & [6\_1]\\
12n233 & res & [3\_1]\\
12n236 & res & [U]\\
12n247 & res & [U]\\
12n248 & res & [U]\\
12n253 & res & [U]\\
12n258 & +-cc & [6\_1]\\
12n260 & res & [U]\\
12n267 & cc & [8\_20]\\
12n270 & res & [U]\\
12n291 & cc & [3\_1\#-(3\_1)]\\
12n304 & res & [3\_1\#-(3\_1)]\\
12n307 & res & [6\_1]\\
12n324 & +-cc & [6\_1]\\
12n334 & cc & [6\_1]\\
12n345 & cc & [8\_20]\\
12n351 & +-cc & [6\_1]\\
12n359 & +-cc & [6\_1]\\
12n376 & cc & [8\_9, 4\_1\#4\_1]\\
12n379 & cc & [8\_20]\\
12n383 & res & [U]\\
12n388 & cc & [6\_1]\\
12n391 & res & [8\_8]\\
12n396 & +-cc & [6\_1]\\
12n409 & res & [U]\\
12n410 & res & [8\_8]\\
12n411 & res & [6\_1]\\
12n439 & cc & [3\_1\#-(3\_1)]\\
12n441 & res & [5\_2]\\
12n442 & +-cc & [6\_1]\\
12n443 & cc & [3\_1\#-(3\_1)]\\
12n451 & res & [U]\\
12n454 & res & [U]\\
12n456 & res & [U]\\
12n460 & +-cc & [6\_1]\\
12n469 & res & [U]\\
12n475 & res & [U]\\
12n480 & +-cc & [6\_1]\\
12n489 & res & [8\_8]\\
12n495 & +-cc & [8\_20]\\
12n496 & cc & [7\_4]\\
12n500 & res & [U]\\
12n514 & res & [U]\\
12n519 & res & [6\_1]\\
12n520 & res & [U]\\
12n522 & res & [U]\\
12n524 & +-cc & [6\_1]\\
12n525 & +-cc & [6\_1]\\
12n531 & +-cc & [3\_1\#-(3\_1)]\\
12n532 & +-cc & [6\_1]\\
12n537 & +-cc & [6\_1]\\
12n543 & res & [U]\\
12n554 & res & [6\_1]\\
12n564 & res & [U]\\
12n569 & res & [3\_1\#-(3\_1)]\\
12n577 & cc & [10\_140]\\
12n583 & cc & [6\_1]\\
12n596 & res & [3\_1\#-(3\_1)]\\
12n601 & res & [3\_1\#-(3\_1)]\\
12n606 & res & [U]\\
12n608 & res & [6\_1]\\
12n621 & res & [U]\\
12n626 & res & [7\_2]\\
12n630 & +-cc & [6\_1]\\
12n631 & +-cc & [6\_1]\\
12n672 & res & [10\_129, 8\_8]\\
12n673 & res & [U]\\
12n675 & cc & [3\_1\#-(3\_1)]\\
12n678 & +-cc & [8\_20]\\
12n681 & +-cc & [3\_1\#-(3\_1)]\\
12n685 & res & [U]\\
12n698 & res & [5\_2]\\
12n699 & res & [U]\\
12n700 & res & [3\_1]\\
12n701 & res & [U]\\
12n707 & res & [3\_1]\\
12n717 & res & [U]\\
12n726 & res & [U]\\
12n730 & res & [U]\\
12n734 & res & [3\_1]\\
12n735 & res & [U]\\
12n737 & res & [6\_1]\\
12n742 & res & [U]\\
12n759 & res & [6\_1]\\
12n769 & res & [U]\\
12n777 & res & [6\_1]\\
12n783 & res & [6\_1]\\
12n794 & +-cc & [6\_1]\\
12n796 & res & [5\_2]\\
12n797 & res & [U]\\
12n804 & res & [8\_9, 4\_1\#4\_1]\\
12n805 & +-cc & [6\_1]\\
12n808 & cc & [8\_20]\\
12n809 & cc & [11n116]\\
12n811 & res & [8\_20]\\
12n813 & +-cc & [6\_1]\\
12n814 & res & [U]\\
12n815 & +-cc & [6\_1]\\
12n818 & +-cc & [6\_1]\\
12n822 & res & [U]\\
12n824 & +-cc & [6\_1]\\
12n829 & cc & [8\_20]\\
12n833 & +-cc & [8\_20]\\
12n844 & +-cc & [6\_1]\\
12n846 & +-cc & [6\_1]\\
12n854 & res & [8\_8]\\
12n855 & +-cc & [6\_1]\\
12n856 & +-cc & [6\_1]\\
12n859 & +-cc & [6\_1]\\
12n861 & res & [U]\\
12n862 & res & [U]\\
12n863 & res & [5\_2]\\
12n867 & cc & [7\_4]\\
12n869 & cc & [8\_20]\\
12n873 & +-cc & [6\_1]\\
12n875 & +-cc & [9\_46]\\
\hline
\end{supertabular}

\onecolumn
\section*{Appendix B} \label{app:taylor}
This appendix contains Seifert matrices (from KnotInfo \cite{Knotinfo}) and bases of isotropic subspaces
(computed with PARI/GP \cite{pari}) of the five knots referred to at the end of \cref{sec:taylor}.
Each of them has Taylor invariant equal to $1$.

\tablehead{\hline Knot & Seifert matrix & Basis of an isotropic subgroup  \\ \hline}
\tabletail{\hline}
\newcommand{\paddedpmatrix}[1]{\raisebox{0pt}[\dimexpr+\height+1ex][\dimexpr+\depth+1ex]{$\begin{pmatrix}#1\end{pmatrix}$}}
\begin{longtable}{|l | c | c | }
\hline Knot & Seifert matrix & Basis of an isotropic subgroup \\ \hline
\endhead
\hline
\endfoot
12a244 &
\paddedpmatrix{
-1	&	0	&	0	&	0	&	0	&	0\\
0	&	2	&	0	&	0	&	1	&	1\\
-1	&	0	&	-1	&	0	&	0	&	0\\
-1	&	-1	&	-1	&	1	&	-1	&	-1\\
0	&	1	&	0	&	0	&	2	&	1\\
0	&	1	&	0	&	0	&	0	&	2
} &
\paddedpmatrix{
 2  \\
 4  \\
 2  \\
 3  \\
-1  \\
-1  \\
},
\paddedpmatrix{
  0 \\
 -4 \\
 -4 \\
 -2 \\
  0 \\
  1 \\
}
\\\hline

12a810 &
\paddedpmatrix{
1	&	0	&	0	&	0	&	0	&	0\\
0	&	-1	&	0	&	1	&	-1	&	1\\
-1	&	0	&	-1	&	-1	&	1	&	-1\\
1	&	0	&	0	&	1	&	0	&	0\\
0	&	0	&	0	&	1	&	-2	&	2\\
0	&	0	&	0	&	-1	&	1	&	-3
} &
\paddedpmatrix{
-1  \\
 2  \\
 3  \\
 5  \\
 3  \\
 2  \\
},
\paddedpmatrix{
  0 \\
  7 \\
  6 \\
 11 \\
  6 \\
  5 \\
}
\\\hline

12a905 &
\paddedpmatrix{
-1	&	0	&	0	&	0	&	0	&	-1\\
-1	&	1	&	1	&	1	&	1	&	-1\\
-1	&	0	&	1	&	1	&	1	&	0\\
1	&	0	&	0	&	-2	&	0	&	1\\
0	&	0	&	0	&	1	&	1	&	0\\
0	&	0	&	0	&	0	&	0	&	-2
} &
\paddedpmatrix{
 1  \\
 1  \\
-1  \\
 1  \\
 1  \\
-1  \\
},
\paddedpmatrix{
 0 \\
-1 \\
 0 \\
 0 \\
 0 \\
 1 \\
}
\\\hline

12n555 &
\paddedpmatrix{
1&0&0&0&0&0\\
0&1&0&0&0&0\\
0&0&1&0&0&0\\
1&0&0&1&1&1\\
-1&-1&0&0&-1&-1\\
-1&0&-1&0&0&-1
} &
\paddedpmatrix{
 1 \\
-1 \\
 0 \\
-1 \\
 1 \\
 0
},
\paddedpmatrix{
0 \\
1 \\
0 \\
0 \\
0 \\
1
}
\\\hline

12n642 &
\paddedpmatrix{
1&0&0&0\\
1&1&0&-1\\
1&-1&1&-1\\
1&0&0&1
} &
\paddedpmatrix{
1 \\ -1 \\ -1 \\ 0
}
\end{longtable}

\section*{Appendix C} \label{app:galg}
For each knot $K$ in the second table of \cref{sec:galg}, we give a Seifert matrix (from KnotInfo \cite{Knotinfo}) and a basis of an Alexander-trivial subgroup of maximal rank
(computed with PARI/GP \cite{pari} using a randomized algorithm as in \cite{BaaderFellerLewarkLiechti_15}).
The basis is chosen such that the matrix of the restriction of the Seifert form to the subgroup with respect to the basis has the following form:
\[
\begin{pmatrix}
0      & 1      & 0 & \cdots & 0 \\
0      & *      & * & \cdots & * \\
\vdots & \vdots &   & \raisebox{0pt}[0pt][0pt]{\makebox[0pt][c]{\fbox{\raisebox{0ex}[3ex][2ex]{\hspace{1.2em}M\hspace{1.2em}}}}}              \\
0      & *      &
\end{pmatrix},
\]
where $M$ is a quadratic matrix of dimension two less, of the same form.
\tablehead{\hline Knot & Seifert matrix & Basis of an Alexander-trivial subgroup  \\ \hline}
\tabletail{\hline}
\begin{longtable}{|l | c | c | }
\hline Knot & Seifert matrix & Basis of an Alexander-trivial subgroup \\ \hline
\endhead
\hline
\endfoot
11n80      & \paddedpmatrix{ 1&0&0&0&0&0\\ 1&0&-1&0&-1&0\\ 1&-1&-2&0&-1&0\\ 0&1&0&1&1&0\\ 1&0&-1&0&0&0\\ 0&1&0&1&1&1 } &
\paddedpmatrix{0\\1\\0\\0\\0\\0}, \paddedpmatrix{1\\0\\0\\0\\0\\0}, \paddedpmatrix{0\\0\\1\\1\\-1\\0}, \paddedpmatrix{0\\0\\0\\1\\0\\-1}\\\hline
12a187     & \paddedpmatrix{ -1&-1&0&0&0&0\\ 0&1&0&0&0&0\\ 0&0&1&0&0&0\\ -1&-1&1&-1&0&0\\ 0&0&0&-1&1&0\\ 1&0&0&1&-1&2} &
\paddedpmatrix{1\\0\\1\\0\\0\\0}, \paddedpmatrix{0\\-1\\0\\0\\0\\0}, \paddedpmatrix{0\\-1\\-1\\2\\2\\1}, \paddedpmatrix{0\\0\\0\\0\\1\\0}\\\hline
12a230     & \paddedpmatrix{ 1&0&0&0&0&0\\ -1&-1&-1&0&0&0\\ -1&0&-1&0&-1&1\\ 0&0&0&2&1&1\\ 0&0&0&0&1&0\\ 0&0&0&1&0&2} &
\paddedpmatrix{0\\1\\0\\0\\1\\0}, \paddedpmatrix{-1\\0\\0\\0\\0\\0}, \paddedpmatrix{0\\0\\2\\0\\2\\1}, \paddedpmatrix{-1\\0\\1\\1\\0\\0}\\\hline
12a317     & \paddedpmatrix{ -1&0&0&0&0&0\\ 0&2&0&1&1&0\\ -1&0&-1&0&0&0\\ -1&0&-1&1&0&0\\ 0&1&0&1&2&1\\ 1&1&1&0&1&2} &
\paddedpmatrix{-2\\0\\0\\0\\-2\\1}, \paddedpmatrix{0\\0\\-5\\0\\-2\\-2}, \paddedpmatrix{-1\\1\\2\\-1\\0\\-1}, \paddedpmatrix{-1\\0\\1\\-1\\0\\-1}\\\hline
12a450     & \paddedpmatrix{ 2&0&0&0&1&1\\ 0&-1&-1&0&0&0\\ 0&0&1&0&0&0\\ 0&0&0&1&0&0\\ 0&-1&0&0&1&0\\ 0&-1&-1&1&0&-1} &
\paddedpmatrix{0\\-1\\1\\0\\0\\1}, \paddedpmatrix{0\\1\\0\\1\\0\\0}, \paddedpmatrix{-1\\1\\0\\0\\1\\0}, \paddedpmatrix{-2\\1\\0\\1\\3\\1}\\\hline
12a542     & \paddedpmatrix{ 1&0&0&0\\ -1&-1&-1&-1\\ -1&0&-3&-1\\ -1&0&-2&-3} &
\paddedpmatrix{3\\-1\\-1\\2}, \paddedpmatrix{15\\0\\-16\\11}\\\hline
12a570     & \paddedpmatrix{ 2&0&0&0&0&0\\ 0&-1&0&-1&0&-1\\ 0&0&1&0&0&0\\ 0&0&0&1&0&0\\ -1&-1&0&0&1&0\\ 0&0&1&-1&0&-1} &
\paddedpmatrix{0\\-1\\-1\\2\\1\\2}, \paddedpmatrix{5\\5\\0\\0\\5\\-1}, \paddedpmatrix{0\\0\\1\\-1\\-1\\-1}, \paddedpmatrix{0\\0\\1\\0\\-2\\-1}\\\hline
12a908     & \paddedpmatrix{ -1&0&-1&0&0&-1\\ -1&1&-1&1&1&0\\ 0&0&-2&0&0&-2\\ 1&0&1&-2&0&1\\ 0&0&0&1&1&0\\ 0&0&-1&0&0&-2} &
\paddedpmatrix{0\\1\\-1\\0\\0\\0}, \paddedpmatrix{0\\1\\-4\\-4\\4\\2}, \paddedpmatrix{3\\0\\-3\\-2\\6\\1}, \paddedpmatrix{-14\\6\\7\\7\\-24\\-5}\\\hline
12a1118    & \paddedpmatrix{ -1&0&0&0&0&0\\ 0&1&0&0&0&0\\ -1&1&1&0&0&1\\ 0&-1&-1&3&-1&-1\\ -1&0&0&0&-1&0\\ -1&1&0&0&0&1} &
\paddedpmatrix{2\\1\\0\\1\\-2\\1}, \paddedpmatrix{3\\0\\2\\2\\0\\3}, \paddedpmatrix{0\\0\\1\\0\\1\\0}, \paddedpmatrix{2\\1\\2\\1\\0\\2}\\\hline
12a1185    & \paddedpmatrix{ -1&-1&-1&0&0&-1\\ 0&1&1&0&0&0\\ 0&0&1&0&0&0\\ 0&1&0&-2&1&0\\ 0&-1&0&0&-1&0\\ 0&0&0&-1&1&-2} &
\paddedpmatrix{1\\-1\\2\\0\\0\\-1}, \paddedpmatrix{0\\-2\\-2\\-2\\2\\1}, \paddedpmatrix{0\\1\\2\\2\\-1\\-1}, \paddedpmatrix{1\\0\\1\\0\\-1\\0}\\\hline
12a1189    & \paddedpmatrix{ 1&0&0&0&0&0\\ 0&-1&0&1&-1&0\\ -1&0&-1&-1&0&1\\ 1&0&0&1&0&0\\ -1&0&0&0&-2&1\\ 1&0&0&1&0&-2} &
\paddedpmatrix{1\\0\\-1\\-1\\0\\0}, \paddedpmatrix{0\\0\\0\\0\\0\\-1}, \paddedpmatrix{-1\\1\\1\\-1\\-1\\0}, \paddedpmatrix{-1\\0\\0\\0\\-1\\-1}\\\hline
12a1208    & \paddedpmatrix{ -1&-1&-1&1&-1&0\\ 0&1&1&0&0&0\\ 0&0&1&0&0&0\\ 0&1&0&-2&1&0\\ 0&0&0&0&-2&1\\ 0&-1&0&0&0&-1} &
\paddedpmatrix{-1\\1\\-1\\0\\0\\-1}, \paddedpmatrix{0\\0\\0\\0\\0\\1}, \paddedpmatrix{0\\-1\\-2\\-1\\1\\3}, \paddedpmatrix{0\\1\\1\\0\\-1\\-3}\\\hline
12n52      & \paddedpmatrix{ 1&0&0&0&0&0\\ -1&-1&0&0&1&-1\\ 0&0&1&0&0&0\\ 0&0&0&1&0&0\\ 1&0&1&0&1&1\\ -1&0&-1&-1&0&-1} &
\paddedpmatrix{1\\1\\-1\\-1\\0\\-1}, \paddedpmatrix{0\\-1\\0\\1\\0\\0}, \paddedpmatrix{0\\1\\0\\0\\1\\1}, \paddedpmatrix{0\\0\\1\\-1\\0\\0}\\\hline
12n63      & \multicolumn{2}{l|}{\paddedpmatrix{ -1&0&-1&-1&-1&-1&0&0\\ 0&-1&0&0&0&0&0&0\\ 0&0&1&0&0&0&0&0\\ 0&1&-1&0&-1&-1&0&0\\ 0&1&-1&0&0&0&0&0\\ 0&1&-1&0&-1&0&0&0\\ 0&0&0&-1&-1&-1&1&1\\ 0&0&0&0&0&0&0&1}} \\\cline{2-3}
 & \multicolumn{2}{r|}{%
\paddedpmatrix{0\\0\\0\\1\\0\\0\\0\\0}, \paddedpmatrix{0\\1\\0\\0\\0\\0\\0\\0}, \paddedpmatrix{0\\1\\0\\0\\0\\1\\0\\0}, \paddedpmatrix{0\\-1\\0\\0\\-1\\0\\0\\0}, \paddedpmatrix{-1\\0\\0\\0\\0\\0\\1\\0}, \paddedpmatrix{0\\0\\0\\0\\0\\0\\0\\1}}\\\hline
12n225     & \paddedpmatrix{ -1&-1&-1&-1&0&-1\\ 0&-2&-1&-2&0&-1\\ 0&-1&0&-1&0&-1\\ 0&-1&-1&-2&0&-1\\ 0&0&0&0&-1&-1\\ 0&-1&-1&-1&0&0} &
\paddedpmatrix{0\\0\\1\\0\\0\\0}, \paddedpmatrix{1\\0\\0\\0\\0\\-1}, \paddedpmatrix{0\\0\\0\\1\\0\\-1}, \paddedpmatrix{0\\1\\1\\0\\0\\-1}\\\hline
12n276     & \paddedpmatrix{ 1&0&0&0&0&0\\ 0&1&0&0&0&0\\ 1&0&1&0&0&0\\ 1&0&0&1&0&0\\ 1&1&-1&0&1&-1\\ 1&0&1&0&0&1} &
\paddedpmatrix{-2\\0\\1\\1\\1\\0}, \paddedpmatrix{2\\0\\0\\-1\\-1\\-1}\\\hline
12n558     & \paddedpmatrix{ -1&0&-1&0&0&0\\ 0&-1&1&1&-1&1\\ 0&0&-1&1&-1&1\\ 0&0&0&1&0&1\\ 0&0&0&0&-1&0\\ 0&0&0&0&0&1} &
\paddedpmatrix{1\\0\\0\\1\\0\\0}, \paddedpmatrix{0\\0\\0\\0\\0\\1}, \paddedpmatrix{0\\1\\0\\-1\\0\\1}, \paddedpmatrix{0\\1\\0\\-1\\-1\\1}\\\hline
12n665     & \multicolumn{2}{l|}{\paddedpmatrix{ 1&0&0&0&0&0&0&0\\ 0&-1&0&1&-1&0&-1&0\\ -1&0&-1&-1&1&0&1&0\\ 1&0&0&1&0&0&0&0\\ 0&0&0&1&0&0&-1&0\\ 0&0&0&0&1&1&1&0\\ 0&0&0&1&0&0&0&0\\ 0&0&0&0&1&1&1&1}} \\ \cline{2-3}
 & \multicolumn{2}{r|}{\paddedpmatrix{0\\1\\0\\0\\0\\1\\0\\0}, \paddedpmatrix{0\\0\\-1\\0\\-2\\1\\0\\-1}, \paddedpmatrix{0\\0\\0\\0\\0\\0\\1\\0}, \paddedpmatrix{0\\0\\1\\1\\1\\-1\\0\\1}, \paddedpmatrix{0\\0\\1\\0\\1\\0\\0\\0}, \paddedpmatrix{1\\0\\0\\0\\0\\0\\1\\0}}\\\hline
12n886     & \paddedpmatrix{ 1&0&-1&0&0&0\\ 0&-1&0&-1&0&0\\ 0&0&-1&1&0&0\\ 0&0&0&-2&0&0\\ 1&1&-1&1&1&0\\ -1&0&1&-1&-1&-1} &
\paddedpmatrix{1\\0\\1\\0\\1\\0}, \paddedpmatrix{0\\1\\0\\0\\0\\0}, \paddedpmatrix{0\\1\\1\\0\\2\\1}, \paddedpmatrix{0\\2\\2\\1\\2\\0}\\
\end{longtable}
\end{small}
\end{appendix}
\end{document}